\newtheorem{theorem}{Theorem}[section]
\newtheorem{lemma}{Lemma}[section]
\newtheorem{proposition}{Proposition}[section]
\newtheorem{definition}{Definition}[section]
\newtheorem{remark}{Remark}[section]
\newtheorem{assumption}{Assumption}[section]
\newtheorem{problem}{Problem}[section]
\newcommand\numberthis{\addtocounter{equation}{1}\tag{\theequation}}
\DeclareMathOperator{\sig}{sig} 
\DeclareMathOperator{\sign}{sign}
\pgfplotsset{compat=1.15}
\definecolor{ttttff}{rgb}{0.2,0.2,1}
\definecolor{ududff}{rgb}{0.30196078431372547,0.30196078431372547,1}
\definecolor{xfqqff}{rgb}{0.4980392156862745,0,1}
\definecolor{qqzzff}{rgb}{0,0.6,1}
\definecolor{ffxfqq}{rgb}{1,0.4980392156862745,0}
\definecolor{qqqqff}{rgb}{0,0,1}
\definecolor{ffqqff}{rgb}{1,0,1}
\definecolor{qqzzqq}{rgb}{0,0.6,0}
\definecolor{ffqqqq}{rgb}{1,0,0}
\definecolor{qqwuqq}{rgb}{0,0.39215686274509803,0}
\definecolor{zzttqq}{rgb}{0.6,0.2,0}
\definecolor{wqwqwq}{rgb}{0.3764705882352941,0.3764705882352941,0.3764705882352941}
\definecolor{qqttcc}{rgb}{0,0.2,0.8}
\definecolor{windBlue}{rgb}{0.3176470588,0.9254901961,0.9450980392}
\newcommand\norm[1]{\| #1 \|}
\newcommand{\pushright}[1]{\ifmeasuring@#1\else\omit\hfill$\displaystyle#1$\fi\ignorespaces}
\newcommand{\pushleft}[1]{\ifmeasuring@#1\else\omit$\displaystyle#1$\hfill\fi\ignorespaces}
\newcommand\scalemath[2]{\scalebox{#1}{\mbox{\ensuremath{\displaystyle #2}}}}
\title{\LARGE \bf Predefined-Time Target Localization and Circumnavigation using Bearing-Only Measurements: Theory and Experiments}
\author{Donglin Sui$^{1}$ and Mohammad Deghat$^{1}$
\thanks{$^{1}$The authors are with the School of Mechanical and Manufacturing Engineering, University of New South Wales, Sydney NSW 2052, Australia (e-mail: {\tt\small d.sui@unsw.edu.au, m.deghat@unsw.edu.au}).}%
}
\begin{document}

\maketitle
\thispagestyle{empty}
\pagestyle{empty}

\begin{abstract}
    This paper investigates the problem of controlling an autonomous agent to simultaneously localize and circumnavigate an unknown stationary target using bearing-only measurements (without explicit differentiation). To improve the convergence rate of target estimation, we introduce a novel adaptive target estimator that enables the agent to accurately localize the position of the unknown target with a tunable, predefined convergence time. Following this, we design a controller integrated with the estimator to steer the agent onto a circular trajectory centered at the target with a desired radius. The predefined-time stability of the overall system including the estimation and control errors are rigorously analyzed. Extensive simulations and experiments using unmanned aerial vehicles (UAVs) illustrate the performance and efficacy of the proposed estimation and control algorithms. 
\end{abstract}

\begin{keywords}
    Bearing-only measurements, circumnavigation, distributed control, localization. 
\end{keywords}

\section{INTRODUCTION}
\label{sec:Introduction}

In recent decades, target circumnavigation has attracted significant interest due to its civil and military applications. Typically, target circumnavigation involves guiding a single or a group of agents to orbit a target at a specified radius. While previous research often assumes that agent(s) has access to the target’s state information, this assumption does not hold in many real-world scenarios where the target exhibits uncooperative behaviors and does not share its state. Consequently, the task requires both target localization and navigation control, thereby presenting a dual control problem \cite{feldbaum1963Dual, rantzer2023Dual}.

Bearing measurements are advantageous for localizing unknown targets in stealth operations. Unlike distance measurements, which require detectable signal transmissions toward the target and thereby compromising stealth, bearing measurements are usually passive, relying solely on receiving naturally emitted signals from the target, and can be obtained with low-cost sensors such as monocular cameras. These benefits have made the \textit{Bearing-only Target Localization and Circumnavigation} (\textit{BoTLC}) problem a growing area of research.

\begin{figure}[ht]
    \centering
    \includegraphics[width=1.0\linewidth]{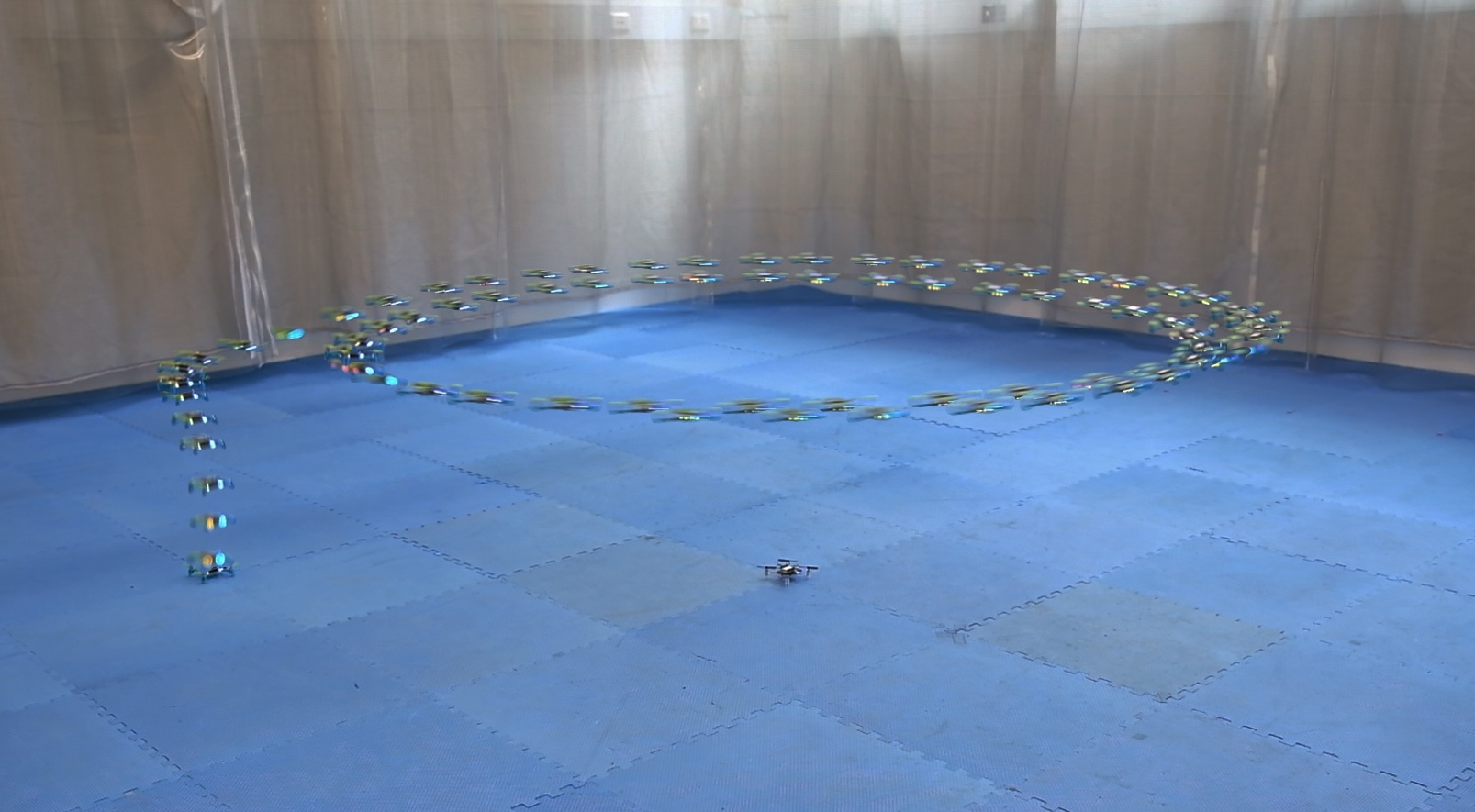} 
    \caption{\label{fig:echo_photo}Motion trail of a \textit{Crazyflie 2.1} quadcopter localizing and circumnavigating a stationary unknown target using the proposed control algorithms.}
\end{figure}

Early work in \cite{deghat2010Target,deghat2014Localization} introduced an estimator-coupled control framework, featuring a target estimator and a circumnavigation controller. The estimator adaptively generates an estimate of the target position using the agent's on-board bearing measurements and the agent's own position in an initial reference frame, whereas the controller enables the agent to orbit the target at a specified radius. This framework was later extended to localize multiple targets \cite{deghat2015Multitarget,chen2023Cooperative}, achieve unbiased estimation of a constant velocity target \cite{sui2024Unbiased}, adapt to 3D workspace \cite{li2018Localization,ma2023Finitetime}, accommodate discrete-time measurements \cite{chen2022DiscreteTime,wang2024Target}, include robustness to bounded load disturbances on the control inputs \cite{greiff2022Target} and to wind disturbances \cite{sui2024Adaptive}, remove the need for the agent's position \cite{wang2022Target,hu2022BearingOnly}, guide non-holonomic \cite{deghat2012Target} and double-integrator agents \cite{cao2023Bearingonly}, and extend to multi-agent systems \cite{dou2020Target,chen2022Multicircular,sui2024CollisionFree}.

With only a few exceptions, the studies mentioned above achieve at best an exponentially fast convergence rate, largely due to their reliance on the persistence-of-excitation condition. Notably, \cite{chen2023Finitetime} and \cite{ma2023Finitetime} are among the first to develop a finite-time stable target estimator using only local on-board bearing measurements. In a different vein, authors of \cite{zhou2023FiniteTime} and \cite{zhao2023Bearing} proposed distributed estimation algorithms using bearing measurements available in the local network to attain finite-time convergence. However, finite-time stability often results in a settling time that is an unbounded function of the system’s initial conditions, with an overly conservative theoretical estimate. These challenges motivate this paper to explore the BoTLC problem with predefined-time convergence. The main contributions are summarized as follows.

\begin{itemize}
    \item This paper presents a novel target estimation algorithm and an enhanced circumnavigation controller to solve the BoTLC problem. Unlike prior methods \cite{deghat2010Target,deghat2014Localization,deghat2015Multitarget,chen2023Cooperative,li2018Localization,ma2023Finitetime,chen2022DiscreteTime,wang2024Target,
    greiff2022Target,wang2022Target,hu2022BearingOnly,deghat2012Target,cao2023Bearingonly,
    dou2020Target,chen2022Multicircular,sui2024CollisionFree,sui2024Unbiased,chen2023Finitetime,zhou2023FiniteTime,zhao2023Bearing,sui2024Adaptive} which achieve finite-time convergence at best, our approach guarantees predefined-time convergence with a settling time that is independent of initial conditions, tunable \textit{a priori}, and tightly estimated. 

    \item The proposed control algorithms are validated through simulations and experimental tests, showing significantly faster convergence compared to existing techniques.
\end{itemize}

\section{PRELIMINARIES}
\label{sec:Preliminaries}
        
    \subsection{Useful Lemmas}
    \label{sec:useful_lemmas}
    
        We first summarize some technical lemmas that are conducive to the forthcoming stability analysis.

        \begin{definition}[see for instance, {\cite{chowdhary2013Concurrent,pan2018Composite,aranovskiy2023PreservingExcitation}}]
            A bounded signal $\bm{\phi}:\, \mathbb{R}_{+}\mapsto \mathbb{R}^{n\times \ell}$ is said to be \textit{interval exciting (IE)} if there exists $t_0\geq 0$, $T>0$, and $\mu>0$ such that 
            \begin{equation}
                \label{eq:def_interval_exciting}
                \int_{t_0}^{t_0+T}\bm{\phi}(\tau)\bm{\phi}^\top(\tau)\; \mathrm{d}\tau \geq \mu I,
            \end{equation}
            where $I$ represents the identity matrix of appropriate dimensions. Further, if \eqref{eq:def_interval_exciting} holds for all $t_0\geq 0$, then signal $\bm{\phi}(t)$ is said to be \textit{persistently exciting (PE)}.
        \end{definition}

        \begin{definition}[{\cite[Definition~2.4]{sanchez-torres2018Class}}]
            \label{definition:finite_time_stability}
            Consider the dynamical system described by
            \begin{equation}
                \label{eq:sample_dynamic_system}
                \dot{\bm{x}} = \bm{f}(t,\bm{x}; \bm{\rho}), 
            \end{equation}
            where $\bm{x}\in\mathbb{R}^n$ is the system state, $t\in[t_0,\infty)$ is the time variable with $t_0\in\mathbb{R}_{+}\cup \{0\}$, $\bm{\rho}\in\mathbb{R}^m$ is a constant vector representing the control parameters of the system, and $\bm{f}:\mathbb{R}_{+}\times \mathbb{R}^n$ is a nonlinear function, which can be discontinuous, and the solutions of \eqref{eq:sample_dynamic_system} are understood in the sense of Filippov \cite{filippov1988Differential}. Then a non-empty set $M\subset \mathbb{R}^n$ is said to be \textit{Globally Strongly Predefined-Time Attractive (GSPTA)} if any solution $\bm{x}(t,\bm{x}_0)$ of \eqref{eq:sample_dynamic_system} reaches $M$ in some finite time $t=t_0+T(\bm{x}_0)$, where the settling-time function $T:\mathbb{R}^n\mapsto \mathbb{R}$ is such that $\sup_{\bm{x}_0\in\mathbb{R}^n} T(\bm{x}_0) = T_c$, where $T_c$ is called the \textit{strong predefined time}.
        \end{definition}

        \begin{lemma}[{\cite[Theorem~2.2]{sanchez-torres2018Class}}]
            \label{lemma:PDT_stability_theorem}
            If there exists a continuous radially unbounded function $V:\mathbb{R}^n\mapsto \mathbb{R}_{+}\cup \{0\}$ such that $\bm{x}\in M$ iff $V(\bm{x})=0$ and any solution $\bm{x}(t)$ of system \eqref{eq:sample_dynamic_system} satisfies 
            \begin{equation}
                \label{eq:PDT_required_V_dot}
                \dot{V} = - \frac{1}{pT_c} \exp (V^p) V^{1-p}
            \end{equation}
            for constants $T_c = T_c(\bm{\rho})>0$ and $p\in (0,1]$, then the set $M$ is GSPTA for system \eqref{eq:sample_dynamic_system} and the strong predefined time is $T_c$.
        \end{lemma}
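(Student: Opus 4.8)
The plan is to reduce the statement to integrating the one-dimensional separable differential equation \eqref{eq:PDT_required_V_dot} that $V$ satisfies along solutions, obtain the settling-time function in closed form, and then verify the two ingredients of Definition~\ref{definition:finite_time_stability}: that $M$ is reached in finite time, and that $\sup_{\bm{x}_0}T(\bm{x}_0)=T_c$.

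First I would introduce the change of variable $W = V^p$, which is legitimate since $p\in(0,1]$ and $V\ge 0$, and note that $W=0$ if and only if $V=0$, i.e.\ $\bm{x}\in M$. Along any solution, as long as $V>0$ one has $\dot W = pV^{p-1}\dot V$, and substituting \eqref{eq:PDT_required_V_dot} all powers of $V$ cancel, leaving the autonomous scalar equation $\dot W = -\tfrac{1}{T_c}\exp(W)$. This cancellation is the crux; the rest is bookkeeping. Multiplying by $\exp(-W)$ gives $\tfrac{d}{dt}\exp(-W) = \tfrac{1}{T_c}$, and integrating from $t_0$ to $t$ yields
\[
 \exp(-W(t)) = \exp\bigl(-V^p(\bm{x}_0)\bigr) + \frac{t-t_0}{T_c}
\]
for as long as $W(t)>0$. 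The right-hand side attains the value $1$ — equivalently $W=0$, i.e.\ $\bm{x}(t)\in M$ — precisely at $t = t_0 + T(\bm{x}_0)$ with $T(\bm{x}_0) = T_c\bigl(1-\exp(-V^p(\bm{x}_0))\bigr)$.

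It then remains to read off the conclusions. Since $V(\bm{x}_0)\ge 0$, we have $\exp(-V^p(\bm{x}_0))\in(0,1]$, hence $T(\bm{x}_0)\in[0,T_c)$, so every solution reaches $M$ in finite time, and in fact strictly less than $T_c$. Radial unboundedness of $V$ implies $V^p(\bm{x}_0)$ can be made arbitrarily large by taking $\|\bm{x}_0\|$ large, so $\sup_{\bm{x}_0\in\mathbb{R}^n}T(\bm{x}_0)=T_c$ (the supremum being approached but not attained). This is exactly the GSPTA property with strong predefined time $T_c$.

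The step I expect to require the most care is the low-regularity one. The hypotheses only assume $V$ continuous and interpret solutions in the Filippov sense, so \eqref{eq:PDT_required_V_dot} should be understood as holding in an almost-everywhere (or Dini-derivative) sense while $V>0$; I would therefore argue that $t\mapsto V(\bm{x}(t))$ is absolutely continuous along solutions, so that the fundamental theorem of calculus legitimately applies to the integration step above. I would also treat the endpoint $p=1$ separately: there the right-hand side of \eqref{eq:PDT_required_V_dot} does not tend to $0$ as $V\to 0^+$, so to conclude that the solution actually stays in $M$ for all $t\ge t_0+T(\bm{x}_0)$ (rather than "overshooting" to $V<0$, which is impossible) I would invoke $V\ge 0$ together with $\dot V\le 0$, which force $V\equiv 0$ thereafter and confirm that $M$ is reached and retained within the claimed time.
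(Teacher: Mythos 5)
Your proof is correct. Note that the paper does not prove this lemma at all --- it is imported by citation from S\'anchez-Torres et al.\ (Theorem~2.2 of the cited reference) --- so there is no in-paper argument to compare against; your separable-ODE computation is essentially the standard proof behind that citation. The substitution $W=V^{p}$ giving $\dot W=-\exp(W)/T_c$, the closed-form settling time $T(\bm{x}_0)=T_c\bigl(1-\exp(-V^{p}(\bm{x}_0))\bigr)<T_c$, and the use of radial unboundedness to get $\sup_{\bm{x}_0}T(\bm{x}_0)=T_c$ are exactly what is needed for GSPTA as defined in the paper; your closing remarks on absolute continuity of $t\mapsto V(\bm{x}(t))$ along Filippov solutions and on the $p=1$ endpoint (where the equality in \eqref{eq:PDT_required_V_dot} can only be meant while $V>0$, the original reference stating it as an inequality) address the only genuinely delicate points, and the retention-in-$M$ discussion is not even required since GSPTA only asks that $M$ be reached.
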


    \subsection{Background and Notations}
    \label{sec:background_and_notations}
        
        Consider a 2D plane containing a stationary target and an agent tasked with the BoTLC mission. The position of the target at time $t$, which is \textit{unknown} to the agent, is represented by $\bm{x}(t)\in\mathbb{R}^2$. The agent has a single-integrator model, 
        \begin{equation}
            \label{eq:agent_kinematics}
            \dot{\bm{y}}(t) = \bm{u}(t) 
        \end{equation}
        where $\bm{u}(t)$ is the control input to be designed. The agent's bearing measurement to the target is represented by the unit vector $\bm{\varphi}(t)$, defined as,
        \begin{equation}
            \label{eq:def_varphi}
            \bm{\varphi}(t)=\frac{\bm{x}(t)-\bm{y}(t)}{\norm{\bm{x}(t)-\bm{y}(t)}} =: \frac{\bm{x}(t)-\bm{y}(t)}{d(t)}
        \end{equation}
        where $d(t)$ denotes the Euclidean distance between the agent and the target at time $t\geq 0$. Let $\bar{\bm{\varphi}}(t)\in\mathbb{R}^2$ be a unit vector perpendicular to $\bm{\varphi}(t)$, obtained by $\pi/2$ clockwise rotation of $\bm{\varphi}(t)$. The bearing angle $\theta(t)$ is defined as the angle between the $x$-axis of the agent's local frame to the unit vector $\bm{\varphi}(t)$, with counterclockwise angles considered positive, as graphically illustrated in Fig.~\ref{fig:notation_conventions}. Other symbols $\hat{d}(t)$, $\hat{\bm{x}}(t)$, $\tilde{\bm{x}}(t)$, $d^*$, $\chi(t)$, $\gamma(t)$, and $\bm{\nu}$ will be explained in subsequent sections.

        \vspace{-0.5cm}

        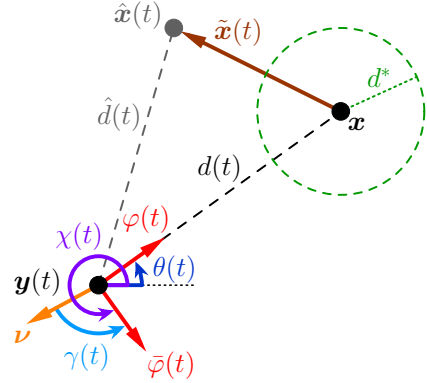
\begin{figure}[h]
            \centering
            \scalebox{0.74}{
                \begin{tikzpicture}[scale = 1.5, line cap=round,line join=round,>=triangle 45,x=1cm,y=1cm]
                    \clip(-4,-3.4) rectangle (1.2,1.56);
                    %
                    %
                    \draw [line width=1pt,dash pattern=on 5pt off 5pt] (-2.9019083532629173,-2.0826434806438683)-- (0,0);
                    \draw (-1.8,-0.4293090608150226) node[anchor=north west] {\Large $d(t)$};
                    %
                    %
                    \draw [line width=1pt,dash pattern=on 5pt off 5pt,color=wqwqwq] (-2.9019083532629173,-2.0826434806438683)-- (-2,1);
                    \draw [color=wqwqwq](-3,0.25) node[anchor=north west] {\Large $\hat{d}(t)$};
                    
                    %
                    %
                    \draw [-{Triangle[length=5mm,width=2mm]},line width=2pt,color=ffqqqq] (-2.9019083532629173,-2.0826434806438683) -- (-2.0894814568450726,-1.499580415464064);
                    \draw [color=ffqqqq](-2.7,-1.05) node[anchor=north west] {\Large $\varphi(t)$};
                    %
                    %
                    \draw [-{Triangle[length=5mm,width=2mm]},line width=2pt,color=ffqqqq] (-2.9019083532629173,-2.0826434806438683) -- (-2.318845288083113,-2.895070377061713);
                    \draw [color=ffqqqq](-2.4,-2.8) node[anchor=north west] {\Large $\bar{\varphi}(t)$};
                    %
                    %
                    \draw [line width=0.8pt,dotted] (-2.9019083532629173,-2.0826434806438683)-- (-1.74573,-2.08264);
        
                    \draw [
                        shift={(-2.9019083532629173,-2.0826434806438683)},
                        -{Stealth[bend, length=3mm, width=2.5mm]}, 
                        line width=2pt,
                        color=qqttcc
                    ] (0.00017248740479756843:0.5202800621839034) arc (0.00017248740479756843:33:0.5202800621839034);
                    \draw [shift={(-2.9019083532629173,-2.0826434806438683)},line width=2pt, color=qqttcc] (0,0) -- (0:0.5202800621839034);
                    \draw [color=qqttcc](-2.3353811744404442,-1.65) node[anchor=north west] {\Large $\theta(t)$};
        
                    \draw [
                        shift={(-2.9019083532629173,-2.0826434806438683)},
                        -{Stealth[bend, length=3mm, width=2.5mm]}, 
                        line width=2pt,
                        color=qqzzff
                    ] (-151.69981639350945:0.578088957982115) arc (-151.69981639350945:-57:0.578088957982115);
                    \draw [color=qqzzff](-3.3990648571275357,-2.7) node[anchor=north west] {\Large $\gamma(t)$};
        
                    \draw [-{Triangle[length=5mm,width=2mm]},line width=2pt,color=ffxfqq](-2.9019083532629173,-2.0826434806438683) -- (-3.7823841875339776,-2.556734511211501);
                    \draw [color=ffxfqq](-4,-2.55) node[anchor=north west] {\Large $\bm{\nu}$};
        
                    \draw [
                        shift={(-2.9019083532629173,-2.0826434806438683)},
                        -{Stealth[bend, length=3mm, width=2.5mm]}, 
                        line width=2pt,
                        color=xfqqff
                    ] (0.00017248740479756843:0.346853374789269) arc (0.00017248740479756843:302:0.346853374789269);
                    \draw [shift={(-2.9019083532629173,-2.0826434806438683)},line width=2pt, color=xfqqff] (0,0) -- (0:0.346853374789269);
                    \draw [color=xfqqff](-3.5,-1.25) node[anchor=north west] {\Large $\chi(t)$};

                    \draw [fill=black] (-2.9019083532629173,-2.0826434806438683) circle (3pt);
                    \draw (-4,-1.8) node[anchor=north west] {\Large $\bm{y}(t)$};

                    \draw [-{Triangle[length=5mm,width=2mm]},line width=2pt,color=zzttqq] (0,0) -- (-1.94, 0.97);
                    \draw [color=zzttqq](-1.6,1.2) node[anchor=north west] {\Large $\tilde{\bm{x}}(t)$};
        
                    \draw [line width=1pt,dash pattern=on 4pt off 4pt,color=qqzzqq] (0,0) circle (1cm);
                    \draw [line width=1pt,dotted,color=qqzzqq] (0,0)-- (0.9097994775058879,0.41504808243143776);
                    \draw [color=qqzzqq](0.23133379900014597,0.6) node[anchor=north west] {\Large $d^*$};
        
                    \draw [fill=black] (0,0) circle (3pt);
                    \draw (0,0) node[anchor=north west] {\Large $\bm{x}$};

                    \draw [draw = wqwqwq, fill=wqwqwq] (-2,1) circle (3pt);
                    \draw [color=wqwqwq](-2.8,1.4) node[anchor=north west] {\Large $\hat{\bm{x}}(t)$};
        


                \end{tikzpicture}
             }
            \caption{\label{fig:notation_conventions}Problem geometry and notations.}
        \end{figure}
        
    \subsection{Problem Statements}
    \label{sec:problem_statements}
        \begin{problem}[Target Localization]
            \label{problem:Localization_Problem}
            \normalfont Design a target estimator $\dot{\hat{\bm{x}}}(t)=\dot{\hat{\bm{x}}} (\bm{y}(t),\bm{\varphi}(t))$ such that the agent localizes a stationary target $\bm{x}$ using only local bearing measurements $\bm{\varphi}$ and the agent's own location $\bm{y}$. In particular, the estimation error $\tilde{\bm{x}}(t)$, defined below, should converge to the origin with a tunable strong predefined time $T_{c,1}> 0$:
            \begin{equation}
                \label{eq:def_x_tilde}
                \lim_{t \to T_{c,1}} \norm{\tilde{\bm{x}}(t)} = \lim_{t \to T_{c,1}} \norm{\hat{\bm{x}}(t) - \bm{x}} = 0 .
            \end{equation}
        \end{problem}

        \vspace{0.25cm}
        
        \begin{problem}[Target Circumnavigation]
            \label{problem:Circumnavigation_Problem}
            \normalfont Design a circumnavigation controller $\bm{u}(t)=\bm{u}(\bm{y}(t),\bm{\varphi}(t),\hat{\bm{x}}(t),d^*,\omega^*)$ such that after some tunable strong predefined time $T_{c,2}>0$, the agent converges to a circular orbit centered at the target with the desired radius $d^{*}>0$, that is,
            \begin{equation}
                \label{eq:def_tracking_error}
                \lim_{t\to T_{c,2}} \norm{\bm{x}  - \bm{y} (t)} - d^{*} = 0,
            \end{equation}
            while ensuring sustained motion along this circular orbit, that is, $\dot{\theta}(t)=\omega^*$ for all $t> T_{c,2}$ where $\omega^*>0$ is the desired angular velocity.
                
        \end{problem}

\section{PROPOSED ALGORITHM}
\label{sec:proposed_algorithm}
    
    \subsection{Target Estimator}
    \label{sec:Target Estimator}
        To solve \textit{Problem~\ref{problem:Localization_Problem}}, we propose the following predefined-time target position estimator:
        \begin{equation}
            \label{eq:def_target_estimator}
            \dot{\hat{\bm{x}}}(t) = -\frac{1}{\alpha_1 T_{c,1}} \exp(\norm{\bm{\xi}(t)}^{\alpha_1}) \bm{\psi}^{\alpha_1}\big(\bm{\xi}(t)\big), 
        \end{equation}
        where $\alpha_1 \in (0,1]$ is a control constant to be chosen, $T_{c,1}>0$ is the tunable strong predefined time, $\bm{\psi}^{\beta}(\bm{z}):=\norm{\bm{z}}^{-\beta}\bm{z}$ if $\bm{z}\neq \bm{0}_n$, and $\bm{\psi}^{\beta}(\bm{z}):=\bm{0}_n$ if $\bm{z}=\bm{0}_n$ for some $\beta \geq 0$ and $\bm{z}\in\mathbb{R}^n$, and $\bm{\xi}(t)\in\mathbb{R}^2$ is the proposed reconstructed error signal, given by
        \begin{equation}
            \label{eq:def_xi}
            \bm{\xi}(t) = \begin{cases}
                P^{-1}(t)\big(P(t)\hat{\bm{x}}(t)-\bm{q}(t)\big), \quad & t > 0, \\
                \bm{0}_2 & t=0,
            \end{cases}
        \end{equation}
        which incorporates Kreisselmeier’s regressors $P(t) \in \mathbb{R}^{2\times 2}$ and $\bm{q} (t) \in \mathbb{R}^2$ that were first introduced to the BoTLC problem in \cite{chen2022Multicircular,chen2023Finitetime} and are defined as follows,
        \begin{subequations}
            \label{eq:def_aux_regressors}
            \begin{alignat}{2}
                \dot{P} (t) &= -P (t) + \bar{\bm{\varphi}}(t) \bar{\bm{\varphi}}^\top (t), && P (0) = \bm{0}_{2\times 2},\label{eq:P_dot_def} \\
                \dot{\bm{q}}(t) &= -\bm{q} (t) + \bar{\bm{\varphi}}(t) \bar{\bm{\varphi}}^\top (t) \bm{y} (t), \quad &&\bm{q}(0) = \bm{0}_2, \label{eq:q_dot_def}
            \end{alignat}
        \end{subequations}
        where $\bm{0}_{2\times 2}$ denotes a 2-by-2 zero matrix, and $\bm{0}_2$ represents a 2-dimensional zero vector.

    \subsection{Circumnavigation Controller}
    \label{sec:circumnavigation_control_law}
        We propose the following circumnavigation control law to answer \textit{Problem~\ref{problem:Circumnavigation_Problem}},
        \begin{align}
            \label{eq:def_circum_controller}
            \begin{split}
                \bm{u}(t) &= \frac{\exp\big(|\tilde{d}(t)|^{\alpha_2}\big)}{\alpha_2 T_{c,2}} \sig^{1-\alpha_2}\big( \tilde{d}(t)\big) \bm{\varphi}(t)+ k_\omega \bar{\bm{\varphi}}(t),
            \end{split}
        \end{align}
        where $\alpha_2 \in (0,1], k_\omega =\omega^*d^*$ are control gains, $\sig^\alpha(\bm{z}):=[\sign(z_1)|z_1|^\alpha, \cdots, \sign(z_n)|z_n|^\alpha]^\top\in\mathbb{R}^n$ for some vector $\bm{z}=[z_1,\cdots,z_n]^\top\in\mathbb{R}^n$ and constant $\alpha>0$ with $\sign(\cdot)$ being the standard signum function, $T_{c,2}>T_{c,1}>0$ is the tunable strong predefined time, and $\tilde{d}(t)=\hat{d}(t)-d^*$ is the error between the desired circumnavigation radius $d^*$ and the agent's estimate of the agent-to-target distance $\hat{d}(t)$, which is defined as,
        \begin{equation}
            \label{eq:def_d_hat}
            \hat{d}(t):=||\hat{\bm{x}}(t)-\bm{y}(t)||.
        \end{equation}

\section{STABILITY ANALYSIS}
\label{sec:stability_analysis}
    This section demonstrates that the proposed target estimator \eqref{eq:def_target_estimator} and controller \eqref{eq:def_circum_controller} jointly solve \textit{Problem~\ref{problem:Localization_Problem}} and \textit{Problem~\ref{problem:Circumnavigation_Problem}}. The proof is organized into three parts: Section~\ref{sec:Stability_Proof_Part_A} establishes preliminary results that hold for $t \in [0, \mathcal{T}_1)$, where $\mathcal{T}_1 > 0$ is detailed in \textbf{Lemma~\ref{lemma:d_is_bounded_by_IVT}}. Section~\ref{sec:Generalization} then extends these results to all \( t \geq 0 \). Finally, the main results are presented in Section~\ref{sec:Main_Results}.

    Before proceeding, the following assumptions will be maintained throughout this paper.

    \begin{assumption}
        \label{assumption:d(0)_initial_condition}
        We assume that the agent and the target occupy different locations at $t=0$, that is, $d(0)>0$. 
    \end{assumption}

    \begin{assumption}
        \label{assumption:stationary_target}
        The target is assumed to remain stationary throughout the system evolution, that is, $\dot{\bm{x}}(t)\equiv \bm{0}_2$ for all $t\geq 0$.
    \end{assumption}

    \begin{assumption}
        \label{assumption:estimation_error}
        We suppose that the norm of the initial target estimation error $\norm{\tilde{\bm{x}}(0)}$ is small such that there exists some positive constant $\eta$ satisfying
        \begin{equation}
            \label{eq:d_star_condition}
            d^* - \norm{\tilde{\bm{x}}(0)} \geq \eta > 0.
        \end{equation}
    \end{assumption}

    \subsection{Preliminary Results}
    \label{sec:Stability_Proof_Part_A}

    To facilitate the following lemma, define set $\mathcal{D}$ as
    \begin{equation}
        \label{eq:def_set_D}
        \mathcal{D}:= \big\{ d \, | \, d_{min} \leq d(t) \leq d_{max}\big\}
    \end{equation}
    where $d_{max}$ is a constant defined as
        \begin{equation}
            \label{eq:def_d_max}
            d_{max} = \max( 2d^*-d_{s} , d(0)) + d_{\varpi},
        \end{equation}
    with $d_{min},d_s,d_\varpi>0$ being any constants chosen to meet the below inequality,
    \begin{equation}
        \label{eq:def of d_min, d_s, d_varpi}
        0 < d_\varpi < d_{min} < d_s < \eta .
    \end{equation}
    Here, $d_\varpi$ and $d_s$ are used solely in the stability analysis and do not bear physical meaning. As will be established later in \textbf{Lemma~\ref{lemma:d_is_bounded_for_all_time}}, $d_{min}$ and $d_{max}$ represents the lower and upper bound of $d(t)$, respectively (resp. hereafter).
    
    \begin{lemma}
        \label{lemma:d_is_bounded_by_IVT}
        Under the target estimator \eqref{eq:def_target_estimator} and the circumnavigation controller \eqref{eq:def_circum_controller}, there exists some time $\mathcal{T}_1 >0$ such that 
        \begin{equation}
            \label{eq:d_is_bounded_by_IVT}
            d(t) \in \mathcal{D}, \quad t\in [0,\mathcal{T}_1).
        \end{equation}
    \end{lemma}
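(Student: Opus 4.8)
The plan is to establish the bounds on $d(t)$ via a continuity/bootstrap argument anchored at $t=0$. First I would verify that at $t=0$ the distance $d(0)$ satisfies $d_{min} < d(0) < d_{max}$: the strict upper bound is immediate from \eqref{eq:def_d_max}, while the strict lower bound requires a short argument — since $d(0) = \norm{\bm{x} - \bm{y}(0)}$ and the circumnavigation target radius is $d^*$, and since \textit{Assumption~\ref{assumption:estimation_error}} gives $d^* - \norm{\tilde{\bm{x}}(0)} \geq \eta > d_s > d_{min}$, I would show that the geometry forces $d(0)$ to be at least of order $\eta$ (or, if $d(0)$ could in principle be small, note that the bootstrap is actually run on a slightly enlarged interval and $d_{min}$ was chosen precisely to accommodate this — the constants $d_\varpi, d_s$ in \eqref{eq:def of d_min, d_s, d_varpi} give the needed slack). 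The cleanest route is to observe that $\hat{d}(0) = \norm{\hat{\bm{x}}(0) - \bm{y}(0)}$ and $d(0) = \norm{\bm{x} - \bm{y}(0)}$ differ by at most $\norm{\tilde{\bm{x}}(0)}$, so $\hat d(0) \ge d(0) - \norm{\tilde{\bm x}(0)}$, combined with any hypothesis on $\hat d(0)$ relative to $d^*$; I will need to check what the paper assumes here and pin down the $t=0$ base case accordingly.

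Next, with the base case $d(0) \in \mathrm{int}(\mathcal{D})$ in hand, I would invoke continuity of $t \mapsto d(t)$. The agent kinematics \eqref{eq:agent_kinematics} with control input \eqref{eq:def_circum_controller} have a right-hand side that is well-defined and continuous in $\bm{y}$ as long as $d(t) > 0$ (the $\bm{\varphi}(t)$ and $\bar{\bm{\varphi}}(t)$ terms are singular only at $d=0$, and $\sig^{1-\alpha_2}(\tilde d)$ is continuous); likewise the estimator dynamics \eqref{eq:def_target_estimator} are continuous wherever $\bm{\xi}(t)$ is defined. Hence a (Filippov) solution exists locally and $d(t)$ varies continuously. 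Define $\mathcal{T}_1 := \sup\{\, t > 0 : d(\tau) \in \mathcal{D} \ \forall \tau \in [0,t)\,\}$. Since $d(0)$ lies in the open interior of $\mathcal{D}$, continuity guarantees $d(t) \in \mathcal{D}$ for all small $t>0$, so $\mathcal{T}_1 > 0$, which is exactly the claim \eqref{eq:d_is_bounded_by_IVT}.

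I expect the main obstacle to be the $t=0$ lower-bound step: showing $d(0) > d_{min}$ (or correctly identifying the role $d_{min}$ plays as a freely chosen small constant that need not be below $d(0)$ but is dominated by $\eta$ via \eqref{eq:def of d_min, d_s, d_varpi}). The subtlety is that \textit{Assumption~\ref{assumption:estimation_error}} constrains $\norm{\tilde{\bm{x}}(0)}$ but says nothing directly about $d(0)$ beyond \textit{Assumption~\ref{assumption:d(0)_initial_condition)}}'s $d(0)>0$; so the argument must route through the relationship between $d(0)$, $\hat d(0)$, and $d^*$. Everything else — the upper bound at $t=0$, continuity, and the definition of $\mathcal{T}_1$ as a positive supremum — is routine. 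One should also note for later use (this lemma is deliberately weak, only asserting existence of \emph{some} $\mathcal{T}_1>0$) that the genuinely hard work of showing $\mathcal{T}_1 = \infty$ is deferred to \textbf{Lemma~\ref{lemma:d_is_bounded_for_all_time}}, so here I would resist the temptation to prove more than stated.
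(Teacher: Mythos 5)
Your continuity argument is essentially the paper's own proof: the paper notes $d(t)$ is continuous (single-integrator kinematics with the continuous input \eqref{eq:def_circum_controller}), takes $\epsilon = \tfrac{1}{2}\min\big(d(0)-d_{min},\, d_{max}-d(0)\big)$, and sets $\mathcal{T}_1$ equal to the resulting $\delta$ from the definition of continuity; your supremum definition of $\mathcal{T}_1$ is an equivalent packaging, and your upper-bound observation $d(0) < d_{max}$ from \eqref{eq:def_d_max} is exactly right.

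The genuine gap is the base case $d(0) > d_{min}$, which you explicitly leave unresolved, and both escape routes you sketch would fail. There is no hypothesis anywhere in the paper relating $\hat{d}(0)$ to $d^*$, and \textbf{Assumption~\ref{assumption:estimation_error}} constrains only $\norm{\tilde{\bm{x}}(0)}$, so no geometric argument can force $d(0)$ to be of order $\eta$: the agent may start arbitrarily close to the target while $\hat{\bm{x}}(0)$ is also close to $\bm{x}$, satisfying all assumptions with $d(0)$ tiny. Your alternative reading (``$d_{min}$ need not be below $d(0)$'') is also backwards, since the lemma asserts $d(0)\in\mathcal{D}$, which requires $d_{min}\leq d(0)$. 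The intended resolution --- which the paper invokes rather tersely by asserting $d(0)-d_{min}>0$ ``for all $d(0)\in\mathbb{R}\setminus\{0\}$'' --- is that $d_{min}$, $d_s$, $d_\varpi$ are free analysis constants constrained only by the chain \eqref{eq:def of d_min, d_s, d_varpi}; since $d(0)>0$ by \textbf{Assumption~\ref{assumption:d(0)_initial_condition}}, one simply chooses $d_{min}$ (and $d_\varpi$ below it) also smaller than $d(0)$, exactly as $d_{max}$ is already allowed to depend on $d(0)$. With that choice made explicit, the base case is immediate and the rest of your argument closes the proof as in the paper.
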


    \begin{proof}
        Since the agent has a single integrator kinematics \eqref{eq:agent_kinematics}, and $\bm{u}(t)$ given in \eqref{eq:def_circum_controller} is continuous in time, $d(t)$ is continuous in time. Additionally, from the definition of $d_{min}$ given in \eqref{eq:def of d_min, d_s, d_varpi} ($d_{max}$ in \eqref{eq:def_d_max}, resp.), we have $d(0)-d_{min}>0$ ($d_{max}-d(0)>0$, resp.) for all $d(0)\in\mathbb{R}\setminus \{0\}$. Therefore, for any $\epsilon >0$, there exists a $\delta>0$ such that $t<\delta \implies |d(t)-d(0)|<\epsilon$. Choose $\epsilon = 1/2 \min \big(d(0)-d_{min},d_{max}-d(0)\big)$, then for any $0\leq t < \delta$, we have $d(t)\in(d_{min},d_{max})$. This proof is completed by setting $\delta = \mathcal{T}_1$.
    \end{proof}

    \begin{remark}
        Because both the target estimator \eqref{eq:def_target_estimator} and the controller \eqref{eq:def_circum_controller} include the term $\bm{\varphi}(t)$, the term $\bm{\varphi}(t)$ should be well-defined. \textbf{Lemma~\ref{lemma:d_is_bounded_by_IVT}} provides a sufficient condition ensuring that $d(t)\neq 0$, and therefore, $\bm{\varphi}(t)$ is well-defined for $t\in [0,\mathcal{T}_1)$. Later in \textbf{Lemma~\ref{lemma:results_generalization}}, results in \textbf{Lemma~\ref{lemma:d_is_bounded_by_IVT}} will be extended to hold for all $t\geq 0$.
    \end{remark}

    \begin{lemma}
        \label{lemma:IE_by_IVT}
        For any $t_0 \geq 0$ and $T>0$ such that $[0,t_0+T] \subset [0,\mathcal{T}_1)$, the signal $\bar{\bm{\varphi}}(t)$ is IE over $[0,t_0+T]$.
    \end{lemma}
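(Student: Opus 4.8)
The plan is to exploit the structure of the circumnavigation controller \eqref{eq:def_circum_controller}: its second term $k_\omega\bar{\bm{\varphi}}(t)$ drives the agent perpendicular to the line of sight, which forces the bearing direction to rotate at a strictly positive angular rate on $[0,\mathcal{T}_1)$. Hence $\bar{\bm{\varphi}}(t)$ sweeps through a nondegenerate set of directions over $[0,t_0+T]$, and the time-integral of the rank-one projector $\bar{\bm{\varphi}}\bar{\bm{\varphi}}^{\top}$ over such a window is positive definite --- which is exactly the IE inequality \eqref{eq:def_interval_exciting} with start time $0$ and window length $t_0+T$.

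First I would record the bearing dynamics. Since $[0,t_0+T]\subset[0,\mathcal{T}_1)$, \textbf{Lemma~\ref{lemma:d_is_bounded_by_IVT}} gives $d(t)\in\mathcal{D}$, so $d(t)\geq d_{min}>0$ on $[0,t_0+T]$ and thus $\bm{\varphi},\bar{\bm{\varphi}}$ are well defined there; since $\bm{u}$ is continuous and $\dot{\bm{x}}\equiv\bm{0}_2$ (\textbf{Assumption~\ref{assumption:stationary_target}}), $\bm{x}-\bm{y}$ is $C^{1}$ and therefore so are $\bm{\varphi}$ and $\bar{\bm{\varphi}}$. Differentiating \eqref{eq:def_varphi} and using $\dot{d}=-\bm{\varphi}^{\top}\bm{u}$ gives $\dot{\bm{\varphi}}=-\tfrac{1}{d}(I-\bm{\varphi}\bm{\varphi}^{\top})\bm{u}=-\tfrac{1}{d}\bar{\bm{\varphi}}\bar{\bm{\varphi}}^{\top}\bm{u}$, where I used the planar identity $\bm{\varphi}\bm{\varphi}^{\top}+\bar{\bm{\varphi}}\bar{\bm{\varphi}}^{\top}=I$. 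The first term of \eqref{eq:def_circum_controller} is a scalar multiple of $\bm{\varphi}$ and is annihilated by the projector $\bar{\bm{\varphi}}\bar{\bm{\varphi}}^{\top}$, so $\dot{\bm{\varphi}}(t)=-\tfrac{k_\omega}{d(t)}\bar{\bm{\varphi}}(t)$. Writing $\bar{\bm{\varphi}}=J\bm{\varphi}$ for the constant $\pi/2$ clockwise rotation matrix $J$ (with $J^{2}=-I$) gives $\dot{\bar{\bm{\varphi}}}(t)=J\dot{\bm{\varphi}}(t)=\tfrac{k_\omega}{d(t)}\bm{\varphi}(t)$, and since $k_\omega=\omega^{*}d^{*}>0$ and $d(t)\leq d_{max}$, we get $\norm{\dot{\bar{\bm{\varphi}}}(t)}=k_\omega/d(t)\geq k_\omega/d_{max}>0$ on $[0,t_0+T]$.

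Next I would convert this into the IE bound. Set $M:=\int_{0}^{t_0+T}\bar{\bm{\varphi}}(\tau)\bar{\bm{\varphi}}^{\top}(\tau)\,\mathrm{d}\tau$, which is symmetric positive semidefinite; it suffices to show $\lambda_{\min}(M)>0$ and take $\mu:=\lambda_{\min}(M)$. The quadratic form $\bm{v}\mapsto\bm{v}^{\top}M\bm{v}=\int_{0}^{t_0+T}\big(\bm{v}^{\top}\bar{\bm{\varphi}}(\tau)\big)^{2}\,\mathrm{d}\tau$ is continuous, so it attains a minimum $\mu\geq 0$ over the compact unit circle $\{\norm{\bm{v}}=1\}$. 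If $\mu=0$, then for a minimizing $\bm{v}$ the nonnegative continuous integrand vanishes identically, whence $\bm{v}^{\top}\bar{\bm{\varphi}}(\tau)=0$ for all $\tau\in[0,t_0+T]$; differentiating and invoking $\dot{\bar{\bm{\varphi}}}=\tfrac{k_\omega}{d}\bm{\varphi}$ with $k_\omega/d>0$ yields $\bm{v}^{\top}\bm{\varphi}(\tau)=0$ as well. But $\{\bm{\varphi}(\tau),\bar{\bm{\varphi}}(\tau)\}$ is an orthonormal basis of $\mathbb{R}^{2}$, so $\bm{v}=\bm{0}_{2}$, contradicting $\norm{\bm{v}}=1$. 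Therefore $\mu>0$ and $M\geq\mu I$, i.e.\ $\bar{\bm{\varphi}}$ is IE over $[0,t_0+T]$.

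I expect the only genuinely delicate point to be the reduction $\dot{\bm{\varphi}}=-\tfrac{k_\omega}{d}\bar{\bm{\varphi}}$: it hinges on the radial gain in \eqref{eq:def_circum_controller} being a scalar (so that term lies along $\bm{\varphi}$ and is killed by $\bar{\bm{\varphi}}\bar{\bm{\varphi}}^{\top}$) and on the identity $I-\bm{\varphi}\bm{\varphi}^{\top}=\bar{\bm{\varphi}}\bar{\bm{\varphi}}^{\top}$, which is specific to two dimensions. The remaining steps --- differentiating $d$ and $\bm{\varphi}$, the compactness/contradiction argument on the unit circle, and the rotation bookkeeping --- are routine.
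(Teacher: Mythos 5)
Your proof is correct, and it reaches the conclusion by a route that differs from the paper's in its final step. Both arguments hinge on the same mechanism: since the radial term of \eqref{eq:def_circum_controller} lies along $\bm{\varphi}$ and drops out by orthogonality, only the tangential term $k_\omega\bar{\bm{\varphi}}$ rotates the bearing, which therefore turns at rate $k_\omega/d(t)\geq k_\omega/d_{max}>0$ on $[0,t_0+T]$ by \textbf{Lemma~\ref{lemma:d_is_bounded_by_IVT}}. The paper then passes to the scalar form of \eqref{eq:def_interval_exciting}: it projects onto an arbitrary fixed unit vector $\bm{\nu}$, tracks the angle $\gamma(t)$, shows $\dot\gamma=\dot\theta=k_\omega/d>0$, and concludes that the monotonically increasing $\gamma$ makes $\int\cos^2\gamma\,\mathrm{d}\tau$ bounded away from zero, asserting the existence of a suitable $\mu$. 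You instead keep the vector bearing dynamics $\dot{\bm{\varphi}}=-\tfrac{k_\omega}{d}\bar{\bm{\varphi}}$, $\dot{\bar{\bm{\varphi}}}=\tfrac{k_\omega}{d}\bm{\varphi}$, form the Gram matrix $M=\int_0^{t_0+T}\bar{\bm{\varphi}}\bar{\bm{\varphi}}^\top\mathrm{d}\tau$, and exclude $\lambda_{\min}(M)=0$ by the contradiction that a unit vector annihilating $\bar{\bm{\varphi}}(\tau)$ identically would, after differentiation, also annihilate $\bm{\varphi}(\tau)$ and hence be zero. What each buys: your compactness/contradiction finish makes the existence of $\mu>0$ airtight (the paper's closing sentence is essentially an assertion, and its $\mu$ would in principle need to be uniform over all $\bm{\nu}$), but it is purely qualitative, yielding no explicit $\mu$ in terms of $k_\omega$, $d_{max}$ and the window length, whereas the paper's angle formulation can be made quantitative by a change of variables; neither the lemma statement nor its later use in \textbf{Lemma~\ref{lemma:P(t)_is_invertible_IVT}} via \cite{aranovskiy2023PreservingExcitation} requires a quantitative bound, so both are adequate. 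Your appeal to continuity of $\bm{u}$ and differentiability of $\bar{\bm{\varphi}}$ is at the same level of rigor as the paper's own treatment in \textbf{Lemma~\ref{lemma:d_is_bounded_by_IVT}}, so it is not a gap.
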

    
    \begin{proof}
        This proof follows the same ideas as in \cite[Lemma~3]{deghat2010Target}. Let $\bm{\nu}\in\mathbb{R}^2$ be some constant unit vector representing a fixed direction in the world frame. Then, according to \cite[Section 2.5]{sastry1994Adaptive}, condition \eqref{eq:def_interval_exciting} can be expressed in scalar form such that the signal $\bar{\bm\varphi}(t)$ is IE if there exists $\mu>0$ such that
        \begin{equation}
            \label{eq:exciting_scalar_form}
            \mu \leq\int_{t_0}^{t_0+T} \big(\bm{\nu}^\top \bar{\bm\varphi}(\tau)\big)^2 \; \mathrm{d}\tau
        \end{equation}
        holds for all constant unit vectors $\bm{\nu}$. Let $\gamma(t)$ be the angle measured counter-clockwise from the unit vector $\bm{\nu}$ to the vector $\bar{\bm\varphi}(t)$. See Fig.~\ref{fig:notation_conventions} for a sample vector $\bm\nu$ and the corresponding $\gamma(t)$ angle. Subsequently, \eqref{eq:exciting_scalar_form} can be written in terms of the angle $\gamma(t)$ as 
        \begin{equation}
            \label{eq:exciting_angle}
            \mu \leq \int_{t_0}^{t_0+T}\cos^2\gamma(\tau) \; \mathrm{d}\tau .
        \end{equation}
        Noticing that the angle $\chi(t)-\gamma(t)$ is always constant as $\bm{\nu}$ is a constant unit vector and that $\bar{\bm{\varphi}}(t)\perp \bm{\varphi}(t)$ for all $t\geq 0$, we obtain that
        \begin{equation}
            \frac{\mathrm{d}\gamma (t)}{\mathrm{d}t} = \frac{\mathrm{d}\chi(t)}{\mathrm{d}t} = \frac{\mathrm{d}\theta(t)}{\mathrm{d}t}. \label{eq:Dzeta}
        \end{equation}
        Since $\mathrm{d}\theta(t)/\mathrm{d}t$ represents the angular velocity of the agent relative to the target, we can express $\mathrm{d}\gamma(t)/\mathrm{d}t$ as
        \begin{align*}
            \scalemath{0.85}{\frac{\mathrm{d}\gamma(t)}{\mathrm{d}t}} \overset{\eqref{eq:Dzeta}}&{=} \scalemath{0.85}{\frac{\bar{\bm{\varphi}}^\top (t)\big(\dot{\bm{y}} - \dot{\bm{x}}\big)}{d(t)}} \\
            \overset{\textit{Asm.\ref{assumption:stationary_target}},\eqref{eq:agent_kinematics},\eqref{eq:def_circum_controller}}&{=} \scalemath{0.8}{\frac{1}{d(t)} \bar{\bm{\varphi}}^\top (t) \Bigg( \frac{\exp\big(|\tilde{d}(t)|^{\alpha_2}\big)}{\alpha_2 T_{c,2}} \sig^{1-\alpha_2}\big( \tilde{d}(t)\big) \bm{\varphi}(t)+k_{\omega} \bar{\bm{\varphi}}(t) \Bigg)} \\
            \overset{\bar{\bm{\varphi}}^\top \bm{\varphi} = 0}&{=} \scalemath{0.85}{\frac{1}{d(t)} k_\omega \bar{\bm{\varphi}}^\top (t) \bar{\bm{\varphi}}(t)  \overset{\bar{\bm{\varphi}}^\top \bar{\bm{\varphi}}=1}{=}\frac{k_\omega}{d(t)} .} \numberthis \label{eq:pe_bound_step1}
        \end{align*}
        From \textbf{Lemma~\ref{lemma:d_is_bounded_by_IVT}}, we know that $d(t)$ is bounded over the time interval $[0,\mathcal{T}_1)$. It then immediately follows that
        \begin{equation}
            \label{eq:dgamma/dt>0}
            \frac{\mathrm{d}\gamma(t)}{\mathrm{d}t} \overset{\eqref{eq:d_is_bounded_by_IVT}}{\geq} \frac{k_\omega}{d_{max}} > 0 .
        \end{equation}
        Consequently, we have
        \begin{equation}
            \gamma(t_0+t) \geq \gamma(t_0)+\frac{k_\omega t}{d_{max}},
        \end{equation}
        for $t\in [0,t_0+T] \subset [0,\mathcal{T}_1)$. Therefore, the continuous function $\gamma(t)$ is monotonically increasing over the time interval $[0, t_0+T]$ and does not converge to a constant value. Hence, it is always possible to find some positive constant $\mu$ and $T$ that satisfy \eqref{eq:exciting_angle}.
    \end{proof}

    \begin{lemma}
        \label{lemma:P(t)_is_invertible_IVT}
        The Kreisselmeier's regressor matrix $P(t)$ defined in \eqref{eq:P_dot_def} is non-singular over the time interval $(0,\mathcal{T}_1)$.
    \end{lemma}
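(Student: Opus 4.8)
The plan is to solve the linear matrix ODE \eqref{eq:P_dot_def} in closed form and then bound $P(t)$ below by a strictly positive multiple of the identity using the interval excitation already established. Since \eqref{eq:P_dot_def} has integrating factor $e^{t}$, its unique solution with $P(0)=\bm{0}_{2\times 2}$ is
\begin{equation}
    P(t) = \int_0^t e^{-(t-\tau)}\, \bar{\bm{\varphi}}(\tau)\bar{\bm{\varphi}}^\top(\tau)\,\mathrm{d}\tau ,
\end{equation}
where the integral is well-defined because $\bar{\bm{\varphi}}$ is continuous and bounded on $[0,\mathcal{T}_1)$ (it is a unit vector, and $\bm{\varphi}$ is well-defined there since $d(\cdot)>0$ on $[0,\mathcal{T}_1)$ by \textbf{Lemma~\ref{lemma:d_is_bounded_by_IVT}}). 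Each integrand $\bar{\bm{\varphi}}(\tau)\bar{\bm{\varphi}}^\top(\tau)$ is symmetric and positive semidefinite, hence so is $P(t)$; it remains to rule out a nontrivial kernel for $t\in(0,\mathcal{T}_1)$.

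Next I would fix an arbitrary $t\in(0,\mathcal{T}_1)$ and invoke \textbf{Lemma~\ref{lemma:IE_by_IVT}} with $t_0=0$ and $T=t$: since $0<t<\mathcal{T}_1$, the interval $[0,t_0+T]=[0,t]\subset[0,\mathcal{T}_1)$, so the lemma yields a constant $\mu>0$ with $\int_0^t \bar{\bm{\varphi}}(\tau)\bar{\bm{\varphi}}^\top(\tau)\,\mathrm{d}\tau \geq \mu I$. On $\tau\in[0,t]$ one has $e^{-(t-\tau)}\geq e^{-t}>0$, and since $\bar{\bm{\varphi}}(\tau)\bar{\bm{\varphi}}^\top(\tau)\geq 0$, this gives
\begin{equation}
    P(t) \;=\; \int_0^t e^{-(t-\tau)}\,\bar{\bm{\varphi}}(\tau)\bar{\bm{\varphi}}^\top(\tau)\,\mathrm{d}\tau \;\geq\; e^{-t}\int_0^t \bar{\bm{\varphi}}(\tau)\bar{\bm{\varphi}}^\top(\tau)\,\mathrm{d}\tau \;\geq\; e^{-t}\mu\, I \;>\; 0 ,
\end{equation}
so $P(t)$ is positive definite, in particular non-singular, for every $t\in(0,\mathcal{T}_1)$, which is the claim.

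I do not anticipate a serious obstacle; the only points requiring care are (i) instantiating \textbf{Lemma~\ref{lemma:IE_by_IVT}} with the right window $t_0=0$, $T=t$ so that $[0,t_0+T]\subset[0,\mathcal{T}_1)$, and (ii) observing that $P(0)=\bm{0}_{2\times 2}$ is singular, which is precisely why the statement concerns the open interval $(0,\mathcal{T}_1)$ rather than $[0,\mathcal{T}_1)$. As an alternative, matrix-free route one could take a unit vector $\bm{z}$, note $\bm{z}^\top P(t)\bm{z}=\int_0^t e^{-(t-\tau)}\big(\bm{z}^\top\bar{\bm{\varphi}}(\tau)\big)^2\mathrm{d}\tau$, and argue from the strict monotonicity $\mathrm{d}\gamma/\mathrm{d}t\geq k_\omega/d_{max}>0$ obtained in the proof of \textbf{Lemma~\ref{lemma:IE_by_IVT}} that $\bm{z}^\top\bar{\bm{\varphi}}(\tau)$ vanishes only on a set of measure zero, so the integral is strictly positive; but the integrating-factor estimate above is the cleaner argument.
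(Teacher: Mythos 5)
Your proof is correct, but it takes a different route from the paper. The paper does not solve the filter explicitly: it cites Proposition~1 of Aranovskiy et al.\ (the reference \cite{aranovskiy2023PreservingExcitation} on preserving-excitation properties of Kreisselmeier's regressor extension), which states that interval excitation of $\bar{\bm{\varphi}}$ is inherited by the determinant $\Delta(t)=\det P(t)$, and then concludes non-singularity of $P(t)$ from $\Delta$ being IE, with \textbf{Lemma~\ref{lemma:IE_by_IVT}} supplying the excitation hypothesis. You instead use the variation-of-constants formula $P(t)=\int_0^t e^{-(t-\tau)}\bar{\bm{\varphi}}(\tau)\bar{\bm{\varphi}}^\top(\tau)\,\mathrm{d}\tau$ and the crude but valid bound $e^{-(t-\tau)}\geq e^{-t}$ on $[0,t]$ to get $P(t)\geq e^{-t}\mu I>0$, invoking \textbf{Lemma~\ref{lemma:IE_by_IVT}} with $t_0=0$, $T=t$ (a legitimate instantiation, since the lemma is stated for any such window and, for each fixed $t\in(0,\mathcal{T}_1)$, the strict monotonicity of $\gamma$ guarantees a positive $\mu$ depending on $t$). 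What your argument buys is self-containment and a quantitative positive-definite lower bound on $P(t)$ (useful, e.g., for bounding $\|P^{-1}(t)\|$), at the cost of a slightly longer derivation; what the paper's citation-based argument buys is brevity and reliance on a published general result about Kreisselmeier regressors rather than on the specific closed form. Both proofs rest on the same key ingredient, the interval excitation of $\bar{\bm{\varphi}}$ established in \textbf{Lemma~\ref{lemma:IE_by_IVT}}, and your remark that $P(0)=\bm{0}_{2\times 2}$ explains why the statement is on the open interval is a correct observation.
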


    \begin{proof}
        Let the signal $\Delta : \mathbb{R}_{+}\mapsto \mathbb{R}$ be the determinant of $P(t)$. According to \cite[Proposition~1]{aranovskiy2023PreservingExcitation}, if the signal $\bar{\bm{\varphi}}(t)$ is IE over the time interval $[0,t_0+T]\subset [0,\mathcal{T}_1)$, then the signal $\Delta(t)$ is also IE over the same time interval. Since \textbf{Lemma~\ref{lemma:IE_by_IVT}} establishes that $\bar{\bm{\varphi}}(t)$ is IE, it follows immediately that $\Delta(t)$ is IE, which implies that $P(t)$ is non-singular over the time interval $(0,t_0+T]\subset [0,\mathcal{T}_1)$.
    \end{proof}

    To bound the target estimation error $||\tilde{\bm{x}}(t)||$ during $t\in[0,\mathcal{T}_1)$, we first present some useful propositions. 

    \begin{proposition}
        For all $t\geq 0$, it holds that
        \begin{equation}
            \label{eq:Px=q}
            P(t)\bm{x} = \bm{q}(t).
        \end{equation}
    \end{proposition}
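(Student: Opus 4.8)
The plan is to introduce the auxiliary signal $\bm{e}(t) := P(t)\bm{x} - \bm{q}(t)$ and show that it obeys a homogeneous linear ODE with zero initial condition, so that it vanishes identically. This is the standard "Kreisselmeier regressor" consistency argument, specialized to a stationary target.

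First I would differentiate $\bm{e}(t)$, using \textbf{Assumption~\ref{assumption:stationary_target}} (so $\dot{\bm{x}} \equiv \bm{0}_2$) together with the regressor dynamics \eqref{eq:P_dot_def} and \eqref{eq:q_dot_def}:
\[
\dot{\bm{e}}(t) = \dot{P}(t)\bm{x} - \dot{\bm{q}}(t) = -\big(P(t)\bm{x}-\bm{q}(t)\big) + \bar{\bm{\varphi}}(t)\bar{\bm{\varphi}}^\top(t)\big(\bm{x}-\bm{y}(t)\big) = -\bm{e}(t) + \bar{\bm{\varphi}}(t)\bar{\bm{\varphi}}^\top(t)\big(\bm{x}-\bm{y}(t)\big).
\]
Next I would eliminate the forcing term. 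By the definition \eqref{eq:def_varphi}, whenever $d(t)>0$ the displacement satisfies $\bm{x}-\bm{y}(t) = d(t)\,\bm{\varphi}(t)$, which is parallel to $\bm{\varphi}(t)$; since $\bar{\bm{\varphi}}(t)\perp\bm{\varphi}(t)$ by construction, we get $\bar{\bm{\varphi}}^\top(t)\big(\bm{x}-\bm{y}(t)\big)=0$, and when $d(t)=0$ we trivially have $\bm{x}-\bm{y}(t)=\bm{0}_2$; hence $\bar{\bm{\varphi}}(t)\bar{\bm{\varphi}}^\top(t)\big(\bm{x}-\bm{y}(t)\big)=\bm{0}_2$ in all cases, leaving $\dot{\bm{e}}(t)=-\bm{e}(t)$.

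Finally, since $P(0)=\bm{0}_{2\times 2}$ and $\bm{q}(0)=\bm{0}_2$ give $\bm{e}(0)=\bm{0}_2$, uniqueness of solutions of $\dot{\bm{e}}=-\bm{e}$ forces $\bm{e}(t)\equiv\bm{0}_2$, i.e. $P(t)\bm{x}=\bm{q}(t)$ for all $t\geq 0$, which is the claim.

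The computation is essentially routine; the only step that requires attention is the cancellation of the forcing term $\bar{\bm{\varphi}}\bar{\bm{\varphi}}^\top(\bm{x}-\bm{y})$, which relies on both the orthogonality $\bar{\bm{\varphi}}\perp\bm{\varphi}$ and the stationarity of the target — this is precisely where the argument would break for a moving target and why \textbf{Assumption~\ref{assumption:stationary_target}} is invoked here.
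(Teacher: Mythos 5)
Your proof is correct and follows essentially the same route as the paper: introduce the auxiliary signal $P(t)\bm{x}-\bm{q}(t)$, use the regressor dynamics, stationarity of the target, and the orthogonality $\bar{\bm{\varphi}}^\top\bm{\varphi}=0$ to obtain $\dot{\bm{e}}=-\bm{e}$ with $\bm{e}(0)=\bm{0}_2$, hence $\bm{e}\equiv\bm{0}_2$. Your extra remark covering the degenerate case $d(t)=0$ is a harmless (indeed slightly more careful) addition to the paper's argument.
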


    \begin{proof}
        This proof follows the same ideas as in \cite[Lemma~1]{chen2022Multicircular}. Introduce an auxiliary function $\bm{f}(t):= P(t)\bm{x}-\bm{q}(t)$, whose time derivative is obtained as
        \begin{align*}
            \dot{\bm{f}}(t) &= \dot{P}(t)\bm{x} + P(t)\dot{\bm{x}} - \dot{\bm{q}}(t) \\
            \overset{\eqref{eq:def_aux_regressors}}&{=} \big(-P(t)+\bar{\bm{\varphi}}(t)\bar{\bm{\varphi}}^\top(t)\big)\bm{x} +P(t)\dot{\bm{x}} \\
            &\qquad\quad -\big( -\bm{q}(t) + \bar{\bm{\varphi}}(t)\bar{\bm{\varphi}}^\top(t) \bm{y}(t) \big) \\
            \overset{\textit{Asm.~\ref{assumption:stationary_target}}}&{=} - \big(P(t)\bm{x}-\bm{q}(t)\big) + \bar{\bm{\varphi}}(t)\bar{\bm{\varphi}}^\top(t) \big(\bm{x}-\bm{y}(t)\big) \\
            \overset{\eqref{eq:def_varphi}}&{=} -\bm{f}(t) + \bar{\bm{\varphi}}(t)\bar{\bm{\varphi}}^\top(t) \bm{\varphi}(t) \norm{\bm{x}-\bm{y}(t)} \\
            \overset{\bar{\bm{\varphi}}^\top\bm{\varphi}=0}&{=} -\bm{f}(t) . \numberthis \label{eq:f_dot=-f}
        \end{align*}
        Given \eqref{eq:f_dot=-f} and noticing that $\bm{f}(0)=P(0)\bm{x} - \bm{q}(0)\overset{\eqref{eq:def_aux_regressors}}{=}\bm{0}_2$, we conclude that $\bm{f}(t)\equiv \bm{0}_2$ for all $t\geq 0$.
    \end{proof}

    \begin{proposition}
        \label{proposition:xi=x_tilde_IVT}
        For $t \in (0,t_0+T]\subset [0,\mathcal{T}_1)$, it holds that 
        \begin{equation}
            \label{eq:xi=x_tilde_IVT}
            \bm{\xi}(t) = \tilde{\bm{x}}(t).
        \end{equation}
    \end{proposition}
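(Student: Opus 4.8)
The plan is to combine the two facts already available: the identity $P(t)\bm{x} = \bm{q}(t)$ for all $t \geq 0$ (the preceding unnumbered proposition) and the non-singularity of $P(t)$ on $(0,\mathcal{T}_1)$ established in \textbf{Lemma~\ref{lemma:P(t)_is_invertible_IVT}}. Since $t \in (0, t_0+T] \subset [0,\mathcal{T}_1)$ and $t > 0$, the matrix $P(t)$ is invertible, so $P^{-1}(t)$ in the definition \eqref{eq:def_xi} of $\bm{\xi}(t)$ is well-defined, and we may freely manipulate $P^{-1}(t)P(t) = I$.

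The core computation is a one-line substitution. Starting from \eqref{eq:def_xi} for $t > 0$, I would replace $\bm{q}(t)$ by $P(t)\bm{x}$ using \eqref{eq:Px=q}, obtaining
\begin{equation*}
    \bm{\xi}(t) = P^{-1}(t)\big(P(t)\hat{\bm{x}}(t) - \bm{q}(t)\big) = P^{-1}(t)\big(P(t)\hat{\bm{x}}(t) - P(t)\bm{x}\big) = P^{-1}(t)P(t)\big(\hat{\bm{x}}(t) - \bm{x}\big) = \hat{\bm{x}}(t) - \bm{x},
\end{equation*}
and then invoke the definition $\tilde{\bm{x}}(t) = \hat{\bm{x}}(t) - \bm{x}$ from \eqref{eq:def_x_tilde} to conclude $\bm{\xi}(t) = \tilde{\bm{x}}(t)$ on the stated interval.

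There is essentially no obstacle here — the only subtlety worth flagging explicitly is that the conclusion is restricted to $t > 0$ (equivalently, to the half-open interval excluding $t=0$), because $P(0) = \bm{0}_{2\times 2}$ is singular and $\bm{\xi}(0)$ is defined separately as $\bm{0}_2$ in \eqref{eq:def_xi}; the non-singularity guaranteed by \textbf{Lemma~\ref{lemma:P(t)_is_invertible_IVT}} holds precisely on $(0,t_0+T]$, which is exactly the range over which the claim is asserted. So the proof should open by noting that $P(t)$ is invertible on this range (citing the lemma), perform the substitution and cancellation above, and close by identifying the result with $\tilde{\bm{x}}(t)$.
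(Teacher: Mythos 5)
Your proposal is correct and follows essentially the same route as the paper: invoke \textbf{Lemma~\ref{lemma:P(t)_is_invertible_IVT}} for the invertibility of $P(t)$ on the stated interval, substitute $\bm{q}(t) = P(t)\bm{x}$ from \eqref{eq:Px=q}, and cancel $P^{-1}(t)P(t)$ to identify $\bm{\xi}(t)$ with $\tilde{\bm{x}}(t)$. Your remark about excluding $t=0$ (since $P(0)$ is singular) matches the paper's restriction to $t>0$ as well.
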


    \begin{proof}
        From \textbf{Lemma~\ref{lemma:P(t)_is_invertible_IVT}}, we know that $P(t)$ is non-singular during the time interval $(0,t_0+T]\subset [0,\mathcal{T}_1)$. Hence, the reconstructed estimation error signal $\bm{\xi}(t)$ can be expressed as,
        \begin{align*}
            \bm{\xi}(t) \overset{\eqref{eq:def_xi}}&{=} P^{-1}(t)\big(P(t)\hat{\bm{x}}(t)-\bm{q}(t)\big) \\
            \overset{\eqref{eq:Px=q}}&{=} P^{-1}(t)\big(P(t)\hat{\bm{x}}(t)-P(t)\bm{x}\big) \\
            &= P^{-1}(t) P(t) \tilde{\bm{x}}(t) \\
            \overset{\textit{Lem.~\ref{lemma:P(t)_is_invertible_IVT}}}&{=} \tilde{\bm{x}}(t), \qquad t \in (0, t_0+T]\subset [0, \mathcal{T}_1).
        \end{align*}
    \end{proof}

    We now provide a lemma ensuring that the target estimation error $||\tilde{\bm{x}}(t)||$ is bounded during the time interval $[0,\mathcal{T}_1)$.

    \begin{lemma}
        \label{lemma:x_tilde_is_bounded_by_IVT}
        Under the target estimator \eqref{eq:def_target_estimator} and the circumnavigation controller \eqref{eq:def_circum_controller}, the norm of the target estimation error $\norm{\tilde{\bm{x}}(t)}$ is bounded by
        \begin{equation}
            \label{eq:x_tilde_is_bounded_by_IVT}
            \norm{\tilde{\bm{x}}(t)} \leq \norm{\tilde{\bm{x}}(0)}, \qquad t \in [0,\mathcal{T}_1).
        \end{equation}
    \end{lemma}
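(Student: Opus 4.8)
The plan is to study the quadratic function $V(t):=\tfrac{1}{2}\norm{\tilde{\bm{x}}(t)}^2$ along the closed-loop trajectory and show that it is non-increasing on $[0,\mathcal{T}_1)$. First I would invoke \textit{Assumption~\ref{assumption:stationary_target}} to note that $\dot{\tilde{\bm{x}}}(t)=\dot{\hat{\bm{x}}}(t)$, so that the estimation error obeys the estimator dynamics \eqref{eq:def_target_estimator}. On the open interval $(0,\mathcal{T}_1)$, \textbf{Lemma~\ref{lemma:P(t)_is_invertible_IVT}} guarantees that $P(t)$ is non-singular, so \textbf{Proposition~\ref{proposition:xi=x_tilde_IVT}} gives $\bm{\xi}(t)=\tilde{\bm{x}}(t)$ there; substituting this identity into \eqref{eq:def_target_estimator} rewrites the error dynamics entirely in terms of $\tilde{\bm{x}}$.

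Next I would differentiate $V$ along these rewritten dynamics. For $t\in(0,\mathcal{T}_1)$ this yields
\[
\dot V(t)=\tilde{\bm{x}}^\top(t)\,\dot{\hat{\bm{x}}}(t)=-\frac{\exp\!\big(\norm{\tilde{\bm{x}}(t)}^{\alpha_1}\big)}{\alpha_1 T_{c,1}}\,\tilde{\bm{x}}^\top(t)\,\bm{\psi}^{\alpha_1}\!\big(\tilde{\bm{x}}(t)\big).
\]
By the definition of $\bm{\psi}^{\alpha_1}$ one has $\tilde{\bm{x}}^\top\bm{\psi}^{\alpha_1}(\tilde{\bm{x}})=\norm{\tilde{\bm{x}}}^{2-\alpha_1}\ge 0$ (with equality iff $\tilde{\bm{x}}=\bm{0}_2$), while the exponential prefactor and $1/(\alpha_1 T_{c,1})$ are strictly positive. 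Hence $\dot V(t)\le 0$ for all $t\in(0,\mathcal{T}_1)$, so $V$, and therefore $\norm{\tilde{\bm{x}}(t)}$, is non-increasing on $(0,\mathcal{T}_1)$.

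It then remains to carry the bound through the endpoint $t=0$. Since $\alpha_1\in(0,1]$, the map $\bm{z}\mapsto\exp(\norm{\bm{z}}^{\alpha_1})\bm{\psi}^{\alpha_1}(\bm{z})$ is bounded on bounded sets (note $\norm{\bm{\psi}^{\alpha_1}(\bm{z})}=\norm{\bm{z}}^{1-\alpha_1}$), and $\bm{\xi}$ is bounded near $0$ (it equals the continuous signal $\tilde{\bm{x}}$ for $t>0$ and $\bm{0}_2$ at $t=0$), so $\dot{\hat{\bm{x}}}$ is bounded there; consequently $\hat{\bm{x}}(t)=\hat{\bm{x}}(0)+\int_0^t\dot{\hat{\bm{x}}}(\tau)\,\mathrm{d}\tau$, hence $\tilde{\bm{x}}$ and $V$, is continuous at $t=0$. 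Letting $s\to 0^+$ in $V(t)\le V(s)$ gives $V(t)\le V(0)$, i.e. $\norm{\tilde{\bm{x}}(t)}\le\norm{\tilde{\bm{x}}(0)}$ for all $t\in[0,\mathcal{T}_1)$, as claimed.

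I expect the boundary behavior at $t=0$ to be the only genuine obstacle. Away from $t=0$ the argument is a routine quadratic-Lyapunov monotonicity computation, but at $t=0$ the identity $\bm{\xi}=\tilde{\bm{x}}$ from \textbf{Proposition~\ref{proposition:xi=x_tilde_IVT}} fails (because $\bm{\xi}$ is defined by the case split \eqref{eq:def_xi} and $P(0)=\bm{0}_{2\times 2}$ is singular), so one has to fall back on continuity of $\hat{\bm{x}}$ — equivalently, on the fact that the estimator admits a well-defined absolutely continuous (Carath\'eodory/Filippov) solution with locally bounded right-hand side — in order to connect $V(0^+)$ with $V(0)$.
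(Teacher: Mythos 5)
Your proposal is correct and follows essentially the same route as the paper: the paper also takes the quadratic Lyapunov candidate $V_0=\tfrac12\tilde{\bm{x}}^\top\tilde{\bm{x}}$, substitutes $\bm{\xi}(t)=\tilde{\bm{x}}(t)$ from \textbf{Proposition~\ref{proposition:xi=x_tilde_IVT}} into \eqref{eq:def_target_estimator}, and concludes from $\dot V_0=-\frac{\exp(\norm{\tilde{\bm{x}}}^{\alpha_1})}{\alpha_1 T_{c,1}}\norm{\tilde{\bm{x}}}^{2-\alpha_1}\le 0$ that the bound holds. Your additional treatment of the endpoint $t=0$ (via continuity of the absolutely continuous solution $\hat{\bm{x}}$, since $P(0)$ is singular and $\bm{\xi}(0)=\bm{0}_2$) is a valid refinement of a detail the paper passes over silently.
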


    \begin{proof}
        Consider the Lyapunov candidate $V_0=1/2\tilde{\bm{x}}^\top(t)\tilde{\bm{x}}(t)$, whose time derivative along the system trajectories is found as
        \begin{align*}
            \dot{V}_0 \overset{\eqref{eq:def_target_estimator}}&{=} \tilde{\bm{x}}^\top(t) \left(-\frac{1}{\alpha_1 T_{c,1}} \exp(\norm{\bm{\xi}(t)}^{\alpha_1})\bm{\psi}^{\alpha_1}\big(\bm{\xi}(t)\big)\right) \\[3pt]
            \overset{\eqref{eq:xi=x_tilde_IVT}}&{=} - \frac{\exp(\norm{\tilde{\bm{x}}(t)}^{\alpha_1}) }{\alpha_1 T_{c,1}} \tilde{\bm{x}}^\top(t) \frac{\tilde{\bm{x}}(t)}{\norm{\tilde{\bm{x}}(t)}^{\alpha_1}} \\[3pt]
            &= - \frac{\exp(\norm{\tilde{\bm{x}}(t)}^{\alpha_1}) }{\alpha_1 T_{c,1}} \norm{\tilde{\bm{x}}(t)}^{2-\alpha_1} ,
        \end{align*} 
        which is negative definite. It follows that the bound \eqref{eq:x_tilde_is_bounded_by_IVT} holds.
    \end{proof}

    With the above preparation, we are now ready to generalize the results to all $t\geq 0$.
    
    \subsection{Generalization}
    \label{sec:Generalization}

    \begin{lemma}
        \label{lemma:d_is_bounded_for_all_time}
        Under the target estimator \eqref{eq:def_target_estimator} and the circumnavigation controller \eqref{eq:def_circum_controller}, \textbf{Lemma~\ref{lemma:d_is_bounded_by_IVT}} holds for all $t\geq 0$.
    \end{lemma}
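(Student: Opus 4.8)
The plan is to run a standard maximal-interval (continuity/extension) argument. Set
$\mathcal{T}^\star := \sup\{\,T>0 : d(t)\in(d_{min},d_{max})\ \text{for all}\ t\in[0,T)\,\}$,
so that $\mathcal{T}^\star \geq \mathcal{T}_1 > 0$ by (the proof of) \textbf{Lemma~\ref{lemma:d_is_bounded_by_IVT}}. It suffices to prove $\mathcal{T}^\star=\infty$. Suppose, to the contrary, that $\mathcal{T}^\star<\infty$. On $[0,\mathcal{T}^\star]$ we have $d(t)\geq d_{min}>0$, hence $\bm{\varphi}(t)$, $\bar{\bm{\varphi}}(t)$ and the control input $\bm{u}(t)$ in \eqref{eq:def_circum_controller} are well defined and $d(t)$ is continuous on $[0,\mathcal{T}^\star]$; by maximality of $\mathcal{T}^\star$ this forces $d(\mathcal{T}^\star)\in\{d_{min},d_{max}\}$ (otherwise $d$ would remain in the open interval on a slightly larger interval, contradicting the supremum).

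I would then record two facts. First, since $d(t)\in\mathcal{D}$ for all $t\in[0,\mathcal{T}^\star)$, repeating the arguments of \textbf{Lemmas~\ref{lemma:IE_by_IVT}--\ref{lemma:x_tilde_is_bounded_by_IVT}}, \textbf{Proposition~\ref{proposition:xi=x_tilde_IVT}} and \eqref{eq:Px=q} with $\mathcal{T}^\star$ in place of $\mathcal{T}_1$ shows that $P(t)$ is non-singular, $\bm{\xi}(t)=\tilde{\bm{x}}(t)$, and $\norm{\tilde{\bm{x}}(t)}\leq\norm{\tilde{\bm{x}}(0)}$ on $[0,\mathcal{T}^\star)$, and the last bound also holds at $t=\mathcal{T}^\star$ by continuity of $\tilde{\bm{x}}$. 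Combined with $\hat{\bm{x}}-\bm{y}=\tilde{\bm{x}}+(\bm{x}-\bm{y})$ and the triangle inequality, this gives $|\hat{d}(t)-d(t)|\leq\norm{\tilde{\bm{x}}(0)}$ on $[0,\mathcal{T}^\star]$. Second, differentiating $d(t)=\norm{\bm{x}-\bm{y}(t)}$ and using Assumption~\ref{assumption:stationary_target}, \eqref{eq:agent_kinematics}, \eqref{eq:def_circum_controller}, $\bm{\varphi}^\top\bm{\varphi}=1$ and $\bar{\bm{\varphi}}^\top\bm{\varphi}=0$ yields $\dot{d}(t)=-\bm{\varphi}^\top(t)\bm{u}(t)=-\tfrac{1}{\alpha_2 T_{c,2}}\exp(|\tilde{d}(t)|^{\alpha_2})\,\sig^{1-\alpha_2}(\tilde{d}(t))$, so $\dot d(t)$ has the sign opposite to $\sign(\tilde d(t))=\sign(\hat{d}(t)-d^*)$, and $\dot d(t)\neq 0$ whenever $\tilde d(t)\neq 0$.

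Next I would treat the two boundary cases. If $d(\mathcal{T}^\star)=d_{min}$, then using the distance bound, the ordering $0<d_\varpi<d_{min}<d_s<\eta$ from \eqref{eq:def of d_min, d_s, d_varpi}, and Assumption~\ref{assumption:estimation_error}, one gets $\hat d(\mathcal{T}^\star)\leq d_{min}+\norm{\tilde{\bm{x}}(0)}<\eta+\norm{\tilde{\bm{x}}(0)}\leq d^*$, hence $\tilde d(\mathcal{T}^\star)<0$ and $\dot d(\mathcal{T}^\star)>0$; since $\tilde d$ is continuous and nonzero at $\mathcal{T}^\star$, $\dot d$ stays strictly positive on a left neighbourhood of $\mathcal{T}^\star$, so $d(t)<d(\mathcal{T}^\star)=d_{min}$ for $t$ slightly below $\mathcal{T}^\star$, contradicting $d(t)>d_{min}$ there. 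If instead $d(\mathcal{T}^\star)=d_{max}$, then $d_{max}\geq 2d^*-d_s+d_\varpi$ by \eqref{eq:def_d_max}, so $\hat d(\mathcal{T}^\star)\geq d_{max}-\norm{\tilde{\bm{x}}(0)}\geq 2d^*-d_s+d_\varpi-\norm{\tilde{\bm{x}}(0)}>d^*$, where the last inequality uses $d^*-\norm{\tilde{\bm{x}}(0)}\geq\eta>d_s>d_s-d_\varpi$; hence $\tilde d(\mathcal{T}^\star)>0$, $\dot d(\mathcal{T}^\star)<0$, and the same neighbourhood argument yields $d(t)>d_{max}$ just below $\mathcal{T}^\star$, again a contradiction. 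Therefore $\mathcal{T}^\star=\infty$, i.e., $d(t)\in(d_{min},d_{max})\subset\mathcal{D}$ for all $t\geq 0$.

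I expect the heart of the argument to be the sign bookkeeping at the boundary: one must verify that whenever $d$ touches $d_{min}$ (resp.\ $d_{max}$) the estimated distance $\hat d$ still lies below (resp.\ above) $d^*$ in spite of the estimation error, so that $\dot d$ points strictly back into $(d_{min},d_{max})$ --- this is exactly where Assumption~\ref{assumption:estimation_error} together with the deliberately chosen orderings of $d_\varpi,d_{min},d_s$ and the formula for $d_{max}$ are used. A secondary point deserving care is to confirm that all the preliminary results of Section~\ref{sec:Stability_Proof_Part_A} (non-singularity of $P$, $\bm{\xi}=\tilde{\bm{x}}$, and the monotone bound on $\norm{\tilde{\bm{x}}}$) genuinely extend to the a priori unknown interval $[0,\mathcal{T}^\star)$ rather than only to $[0,\mathcal{T}_1)$.
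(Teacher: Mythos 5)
Your proof is correct and follows essentially the same route as the paper: a contradiction at the first exit time from the interval, using the monotone bound $\norm{\tilde{\bm{x}}(t)}\leq\norm{\tilde{\bm{x}}(0)}$ (hence $|\hat d-d|\leq\norm{\tilde{\bm{x}}(0)}$), the explicit sign of $\dot d$ from \eqref{eq:delta_dynamics}, and the ordering in \eqref{eq:def of d_min, d_s, d_varpi} together with Assumption~\ref{assumption:estimation_error} to show the vector field points back into the interval at $d_{min}$ and $d_{max}$. Your handling of the boundary (evaluating the sign of $\dot d$ at the exit time and propagating it to a left neighbourhood by continuity) is just a tidier rendition of the paper's ``$\delta(t)$ close to the boundary'' bookkeeping with $\underline{\varepsilon},\bar{\varepsilon}$, and your explicit remark that the preliminary results extend to $[0,\mathcal{T}^\star)$ matches what the paper does implicitly.
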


    \begin{proof}
        \label{proof:d_is_bounded_for_all_t}
        This proof employs a similar approach to \cite[Lemma~4.7]{sui2024Unbiased}, and is divided into two parts: in Part I, we derive the dynamics of $d(t)$ as preparation; in Part II, proof by contradiction is employed to establish the conclusion. 
        
        \textbf{Part I.} Using the triangle inequality, we obtain that,
        \begin{align*}
            \norm{\tilde{\bm{x}}(t) } \geq \big|d (t) - \hat{d} (t) \big|. \numberthis \label{eq:triangular_inequality}
        \end{align*}
        Introduce auxiliary variables $\delta(t)$ and $\varrho(t)$ as
        \begin{subequations}
            \label{eq:def_aux_delta_and_varrho}
            \begin{align*}
                \delta(t) &= d(t)- d^* ,\numberthis \label{eq:def_aux_delta} \\
                \varrho(t) &= d(t)-\hat{d}(t) . \numberthis \label{eq:def_aux_varrho}
            \end{align*}
        \end{subequations}
        The inequality \eqref{eq:triangular_inequality} can thereby be written, using \textbf{Lemma~\ref{lemma:x_tilde_is_bounded_by_IVT}}, in the form of
        \begin{equation}
            \label{eq:varrho_is_bounded}
            |\varrho (t)| \leq \norm{\tilde{\bm{x}} (t)} \leq \norm{\tilde{\bm{x}}(0)}, \quad t \in [0,\mathcal{T}_1).
        \end{equation}
        The dynamics of $\delta(t)$ is obtained as
        \begin{align*}
            \scalemath{0.85}{\dot{\delta}(t)} &= \scalemath{0.85}{\dot{d}(t) - 0} \\[3pt]
            &= \scalemath{0.85}{\frac{\big(\bm{x}-\bm{y}(t)\big)^\top \big(\dot{\bm{x}}-\dot{\bm{y}}(t)\big)}{d(t)}} \overset{\eqref{eq:def_varphi}}{=} \scalemath{0.85}{\bm{\varphi}^\top(t) \big(\bm{0}_2 - \dot{\bm{y}}(t)\big)} \\[3pt]
            \overset{\eqref{eq:agent_kinematics},\eqref{eq:def_circum_controller}}&{=} \scalemath{0.85}{-\bm{\varphi}^\top(t) \Bigg(\frac{\exp\big(|\tilde{d}(t)|^{\alpha_2}\big)}{\alpha_2 T_{c,2}} \sig^{1-\alpha_2}\big( \tilde{d}(t)\big) \bm{\varphi}(t)+ k_\omega \bar{\bm{\varphi}}(t) \Bigg)} \\[3pt]
            \overset{\bm{\varphi}^\top\bar{\bm{\varphi}}=0}&{=} \scalemath{0.85}{ -\frac{\exp\big(|\tilde{d}(t)|^{\alpha_2}\big)}{\alpha_2 T_{c,2}} \sig^{1-\alpha_2}\big( \hat{d}(t) - d(t) + d(t) - d^*\big)} \\[3pt]
            \overset{\eqref{eq:def_aux_delta_and_varrho}}&{=} \scalemath{0.85}{\sign \big( -\delta(t) + \varrho(t) \big) \frac{\exp\big(|\tilde{d}(t)|^{\alpha_2}\big)}{\alpha_2 T_{c,2}} |\delta(t) - \varrho(t)|^{1-\alpha_2}.} \numberthis \label{eq:delta_dynamics}
        \end{align*}
        From \eqref{eq:delta_dynamics}, we know that the sign of $\dot{\delta}(t)$ depends solely on the sign of $-\delta(t)+\varrho(t)$.
        
        \textbf{Part II.} Suppose $\mathcal{T}_1$ specifies the first exit time from $\mathcal{D}$, that is, $\mathcal{T}_1 = \inf\limits_{t>0} d(t)\not\in\mathcal{D}$. Assume that the first exit time is finite, that is, $\mathcal{T}_1<\infty$. Due to the continuity of $d(t)$ in time, there must exist a finite time interval $t\in [\tau-\epsilon, \tau]\subseteq [0,\mathcal{T}_1]$ and some $\epsilon \geq \mathrm{d}t$ where $\mathrm{d}t$ is an infinitesimally small change in time, such that $\dot{d}(t)$ is either 
        \begin{enumerate}
            \item strictly negative if it were the case that $d(\mathcal{T}_1) < d_{min}$, or
            \item strictly positive if it were the case that $d(\mathcal{T}_1) > d_{max}$.
        \end{enumerate}
        We now consider the two cases in more detail and show that each case leads to a contradiction.
        \\
        \textbf{Case 1:} $d(\mathcal{T}_1) < d_{min}$. Suppose at time $t = \tau -\epsilon$, we have $\dot{d}(t) < 0$. Further, for the case where $d(t)$ were to reduce beyond $d_{min}$, it would be sufficient to consider the situation where $d(t)$ is already very close to the boundary $d_{min}$. In other words, we consider the situation where
        \begin{equation}
            \label{eq:delta_t_case_1_range}
            \delta(t) \in [ d_{min}-d^*, d_{min}-d^* + \underline{\varepsilon}], \quad t \in [\tau-\epsilon, \tau],
        \end{equation}
        with $\underline{\varepsilon}>0$ being some small constant satisfying
        \begin{equation}
            \label{eq:def_varepsilon_under}
            d_{min}-d^* + \underline{\varepsilon} < d_s-d^*.
        \end{equation}
        See Fig.~\ref{fig:delta_bound} for a graphical illustration of the depicted situation and the considered range of $\delta(t)$ given in \eqref{eq:delta_t_case_1_range}. 
        
        From \eqref{eq:delta_dynamics}, we have $\sign\big(\dot{\delta}(t)\big) = \sign\big(-\delta(t)+\varrho(t)\big)$, and from \eqref{eq:def_aux_delta}, it is clear that $\sign\big(\dot{d}(t)\big)=\sign\big(\dot{\delta}(t)\big)$. Therefore, to determine the sign of $\dot{d}$ at time $t' = \tau -\epsilon + \mathrm{d}t$, we just need to evaluate $-\delta(t')+\varrho(t')$:
        \begin{align*}
            -\delta(t'&)+\varrho(t') \\
            \overset{\eqref{eq:delta_t_case_1_range}}&{\geq} -(d_{min} - d^* + \underline{\varepsilon}) + \varrho(t') \\
            \overset{\eqref{eq:def_varepsilon_under}}&{\geq} -(d_s - d^*) + \varrho(t') \\
            &\geq-(d_s - d^*) - |\varrho(t')| \\
            \overset{\eqref{eq:varrho_is_bounded}}&{\geq} -(d_s - d^*) - \norm{\tilde{\bm{x}}(0)} \\
            &= d^*-d_s - \norm{\tilde{\bm{x}}(0)} \overset{\eqref{eq:d_star_condition}}{\geq} \eta -d_s \overset{\eqref{eq:def of d_min, d_s, d_varpi}}{>} 0,  \numberthis \label{eq:delta_pos_inequality_IVT} 
        \end{align*}
        which contradicts the implication that $\dot{d}(t')< 0$.
        \\
        \textbf{Case 2:} $d(\mathcal{T}_1) > d_{max}$. Similarly, suppose at time $t=\tau-\epsilon$, we have $\dot{d}(t) > 0$. Furthermore, for the case where $d(t)$ were to increase beyond $d_{max}$, it would be sufficient to consider the situation where $d(t)$ is very close to the boundary $d_{max}$, that is, we consider the situation where
        \begin{equation}
            \label{eq:delta_t_case_2_range}
            \delta(t) \in [d_{max}-d^* - \bar{\varepsilon}, d_{max} - d^* ],\quad t \in [\tau-\epsilon, \tau],
        \end{equation}
        with $\bar{\varepsilon}\in(0,d_{\varpi})$ being any constant chosen in the range. See Fig.~\ref{fig:delta_bound} for a graphical illustration of the depicted situation and the considered range of $\delta(t)$ given in \eqref{eq:delta_t_case_2_range}. 
        
        Likewise, the sign of $d(t')$ at time $t'=\tau-\epsilon + \mathrm{d}t$ can be evaluated using the term $-\delta(t')+\varrho(t')$, as follows,
        \begin{align*}
            -\delta(t')&+\varrho(t') \\
            \overset{\eqref{eq:delta_t_case_2_range}}&{\leq} -(d_{max}-d^*- \bar{\varepsilon}) + \varrho(t') \\
            &\leq -(d_{max}-d^*- \bar{\varepsilon}) + |\varrho(t')| \\
            \overset{\eqref{eq:varrho_is_bounded}}&{\leq} -(d_{max}-d^*- \bar{\varepsilon}) + \norm{\tilde{\bm{x}}(0)} \\
            &= d^* - d_{max} + \bar{\varepsilon} + \norm{\tilde{\bm{x}}(0)}. \numberthis \label{eq:delta_neg_inequality_general_case} 
        \end{align*}
        First consider the scenario where $2d^*-d_s \leq d(0)$, and therefore, $d_{max}=d(0)+d_{\varpi}$. Subsequently, \eqref{eq:delta_neg_inequality_general_case} can be expressed as,
        \begin{align*}
            -\delta(t')&+\varrho(t') \\
            &= d^* - \big(d(0) + d_{\varpi}\big) + \bar{\varepsilon} + \norm{\tilde{\bm{x}}(0)} \\
            &\leq d^* - (2d^*-d_s +d_\varpi) + \bar{\varepsilon} + \norm{\tilde{\bm{x}}(0)} \\
            &= -(d^* - d_s -  \norm{\tilde{\bm{x}}(0)} ) + \bar{\varepsilon} - d_\varpi \\
            \overset{\eqref{eq:d_star_condition}}&{\leq} - (\eta-d_s) - (  d_\varpi - \bar{\varepsilon} ) < 0. \numberthis \label{eq:delta_neg_inequality_case_1}
        \end{align*}
        Now consider another scenario where $2d^*-d_s > d(0)$, which means $d_{max}=2d^*-d_s + d_{\varpi}$. Then, \eqref{eq:delta_neg_inequality_general_case} becomes,
        \begin{align*}
            -\delta(t')&+\varrho(t') \\
            &= d^* - (2d^*-d_s+d_{\varpi}) + \bar{\varepsilon} + \norm{\tilde{\bm{x}}(0)} \\
            \overset{\eqref{eq:d_star_condition}}&{\leq} - (\eta-d_s) - ( d_\varpi - \bar{\varepsilon} ) < 0. \numberthis \label{eq:delta_neg_inequality_case_2}
        \end{align*}

        In both scenarios \eqref{eq:delta_neg_inequality_case_1} and \eqref{eq:delta_neg_inequality_case_2}, we have shown that $\sign \big( -\delta(t) + \varrho(t) \big) = -1$, and therefore, it follows that $\dot{d}(t')<0$, which contradicts the implication that $\dot{d}(t') > 0$.

        By deriving a contradiction in each case, we conclude that the assumption that the first exist time $\mathcal{T}_1$ is finite must be false. Therefore, $\mathcal{T}_1=\infty$ and $d(t)\in\mathcal{D}$ for all $t\geq 0$.
    \end{proof}

    \begin{lemma}
        \label{lemma:results_generalization}
        \textbf{Lemma~\ref{lemma:IE_by_IVT}} holds for all $t\geq 0$, and \textbf{Lemma~\ref{lemma:P(t)_is_invertible_IVT}} and \textbf{Proposition~\ref{proposition:xi=x_tilde_IVT}} are valid for all $t> 0$.
    \end{lemma}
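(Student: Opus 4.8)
The plan is to observe that \textbf{Lemma~\ref{lemma:d_is_bounded_for_all_time}} has already done the substantive work: it upgrades the \emph{a priori} finite horizon $\mathcal{T}_1$ of \textbf{Lemma~\ref{lemma:d_is_bounded_by_IVT}} to $\mathcal{T}_1=\infty$, so that $d(t)\in\mathcal{D}$, and in particular $d(t)\in[d_{min},d_{max}]$ with $d_{min}>0$, for \emph{all} $t\geq 0$. Hence $\bm{\varphi}(t)$ and $\bar{\bm{\varphi}}(t)$ in \eqref{eq:def_varphi} are well-defined on all of $[0,\infty)$. The remaining task is purely bookkeeping: re-inspect the proofs of \textbf{Lemma~\ref{lemma:IE_by_IVT}}, \textbf{Lemma~\ref{lemma:P(t)_is_invertible_IVT}}, and \textbf{Proposition~\ref{proposition:xi=x_tilde_IVT}} and verify that each of them used the interval $[0,\mathcal{T}_1)$ \emph{only} through the boundedness statement $d(t)\in\mathcal{D}$, so that replacing $[0,\mathcal{T}_1)$ by $[0,\infty)$ (resp. $(0,\infty)$) leaves every step intact.

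Concretely, I would proceed in the order the three results were originally established. First, for \textbf{Lemma~\ref{lemma:IE_by_IVT}}: the chain \eqref{eq:pe_bound_step1}--\eqref{eq:dgamma/dt>0} invoked only the control law, the orthogonality $\bar{\bm{\varphi}}^\top\bm{\varphi}=0$, and the upper bound $d(t)\leq d_{max}$; since the last now holds for all $t\geq 0$ by \textbf{Lemma~\ref{lemma:d_is_bounded_for_all_time}}, we obtain $\mathrm{d}\gamma(t)/\mathrm{d}t\geq k_\omega/d_{max}>0$ for all $t\geq 0$, so $\gamma(t)$ is strictly increasing and unbounded on $[0,\infty)$, and hence for every $t_0\geq 0$ there exist $\mu>0$ and $T>0$ satisfying \eqref{eq:exciting_angle}, i.e. $\bar{\bm{\varphi}}$ is IE over $[0,t_0+T]$ with no restriction on $t_0+T$. (This in fact shows $\bar{\bm{\varphi}}$ is PE, but IE is all that is needed downstream.) Second, for \textbf{Lemma~\ref{lemma:P(t)_is_invertible_IVT}}: feeding the now-unrestricted IE property of $\bar{\bm{\varphi}}$ into \cite[Proposition~1]{aranovskiy2023PreservingExcitation} yields that $\det P(t)$ is IE on $(0,t_0+T]$ for every $t_0,T$, hence $P(t)$ is non-singular for all $t>0$. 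Third, for \textbf{Proposition~\ref{proposition:xi=x_tilde_IVT}}: its proof used nothing beyond invertibility of $P(t)$ and the identity \eqref{eq:Px=q}, the latter already valid for all $t\geq 0$; with $P(t)$ invertible on $(0,\infty)$, the computation $\bm{\xi}(t)=P^{-1}(t)P(t)\tilde{\bm{x}}(t)=\tilde{\bm{x}}(t)$ holds for all $t>0$, which is \eqref{eq:xi=x_tilde_IVT} globalized.

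The only point requiring genuine care --- and the closest thing to an obstacle --- is the logical ordering, i.e. confirming there is no circularity. The preliminary results of Section~\ref{sec:Stability_Proof_Part_A} (including \textbf{Lemma~\ref{lemma:x_tilde_is_bounded_by_IVT}}) were all stated on the generic interval $[0,\mathcal{T}_1)$ whose existence is guaranteed by \textbf{Lemma~\ref{lemma:d_is_bounded_by_IVT}} alone; \textbf{Lemma~\ref{lemma:d_is_bounded_for_all_time}} then consumes those results to prove $\mathcal{T}_1=\infty$; and only afterwards do we substitute $\mathcal{T}_1=\infty$ back into the earlier proofs. Since each of those proofs is monotone in the length of the interval on which $d(t)\in\mathcal{D}$ is known, the substitution is legitimate, and the lemma follows. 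I would close by noting explicitly that all subsequent sections may therefore treat \textbf{Lemma~\ref{lemma:IE_by_IVT}}, \textbf{Lemma~\ref{lemma:P(t)_is_invertible_IVT}}, \textbf{Proposition~\ref{proposition:xi=x_tilde_IVT}}, and the identity \eqref{eq:xi=x_tilde_IVT} in particular, as global-in-time facts.
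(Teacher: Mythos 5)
Your proposal is correct and follows essentially the same route as the paper: invoke \textbf{Lemma~\ref{lemma:d_is_bounded_for_all_time}} to get $d(t)\in\mathcal{D}$ for all $t\geq 0$, conclude that the bound \eqref{eq:dgamma/dt>0} and hence the IE (indeed PE) property of $\bar{\bm{\varphi}}$ hold without restriction on $t_0$, and then propagate this through the non-singularity of $P(t)$ to \eqref{eq:xi=x_tilde_IVT}. Your extra remark on the absence of circularity in the ordering of the lemmas is a helpful clarification but not a different argument.
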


    \begin{proof}
        From \textbf{Lemma~\ref{lemma:d_is_bounded_for_all_time}}, we know that the agent-to-target distance $d(t)$ is bounded by \eqref{eq:d_is_bounded_by_IVT} for all $t\geq 0$. Consequently, the inequality \eqref{eq:dgamma/dt>0} for $\mathrm{d}\gamma(t)/\mathrm{d}t$ holds for all $t\geq 0$. It follows that \textbf{Lemma~\ref{lemma:IE_by_IVT}} holds for all $t_0 \geq 0$, and the signal $\bar{\bm{\varphi}}(t)$ is thus PE.

        A direct result of the generalized \textbf{Lemma~\ref{lemma:IE_by_IVT}} is that the Kreisselmeier's regressor $P(t)$ remains non-singular for all $t>0$. Therefore, \textbf{Lemma~\ref{lemma:P(t)_is_invertible_IVT}} holds for all $t > 0$. Consequently, \textbf{Proposition~\ref{proposition:xi=x_tilde_IVT}}, which relies on the non-singularity of $P(t)$, is valid for all $t>0$.
    \end{proof}

    \begin{figure}[t]
        \centering
        \definecolor{c006600}{RGB}{0,102,0}
        \definecolor{cff8000}{RGB}{255,128,0}
        \definecolor{c810080}{RGB}{129,0,128}
        \definecolor{c009999}{RGB}{0,153,153}
        \definecolor{c7f00ff}{RGB}{127,0,255}
        \scalebox{0.465}{
        \begin{tikzpicture}[y=1cm, x=1cm, yscale=1,xscale=1, every node/.append style={scale=1}, inner sep=0pt, outer sep=0pt]
            \path[fill=white] ;
            \path[draw=black] (5.8, 5.8) ellipse (0.8cm and 0.8cm);
            \path[draw=black] (5.8, 5.8) ellipse (2.4cm and 2.4cm);
            \path[draw=black] (5.8, 5.8) ellipse (3.8cm and 3.8cm);
            \path[draw=black] (5.8, 5.8) ellipse (5.8cm and 5.8cm);
            %
            %
            \path[draw=black,line width=0.1cm,miter limit=10.0] (6.0, 5.8) -- (17.2, 5.8);
            \path[draw=black,fill=black,line width=0.1cm,miter limit=10.0] (17.4, 5.8) -- (17.1, 5.7) -- (17.2, 5.8) -- (17.1, 5.9) -- cycle;
            %
            %
            \path[draw=black,fill=black,line width=0.1cm] (5.8, 5.8) ellipse (0.2cm and 0.2cm);
            %
            %
            \begin{scope}[shift={(-0.0, 0.0)}]
                \node[text=black,anchor=south] at (17.9, 5.6){\LARGE $\mathbb{R}$};
            \end{scope}
            %
            %
            \path[draw=black,line width=0.0cm,miter limit=10.0] (6.6, 10.6) -- (6.6, 4.8);
            %
            %
            \begin{scope}[shift={(-0.0, 0.0)}]
                \node[text=black,anchor=south] at (6.6, 4.2){\Large $d_{min}{\color{red}-d^*}$};
            \end{scope}
            %
            %
            \path[draw=black,line width=0.0cm,miter limit=10.0] (8.2, 9.0) -- (8.2, 4.0);
            %
            %
            \begin{scope}[shift={(-0.0, 0.0)}]
                \node[text=black,anchor=south] at (8.2, 3.4){\Large $d_s{\color{red}-d^*}$};
            \end{scope}
            %
            %
            \path[draw=black,line width=0.0cm,miter limit=10.0] (9.6, 5.8) -- (9.6, 3.2);
            %
            %
            \begin{scope}[shift={(-0.0, 0.0)}]
                \node[text=red,anchor=south] (text82) at (9.6, 2.6){\Large ${\color{red}0}$};
            \end{scope}
            %
            %
            \path[draw=black,line width=0.0cm,miter limit=10.0] (11.6, 10.6) -- (11.6, 2.6);
            %
            %
            \begin{scope}[shift={(-0.0, 0.0)}]
                \node[text=black,anchor=south] at (13.5, 0.4){ 
                    \begin{minipage}{\linewidth}
                        \Large \begin{align*}
                            &d_{max}\color{red}
                            -d^* \color{black} \\
                            &=\max(2d^*-d_s,d(0))+d_{\varpi}\color{red}-d^*\color{black}\\
                            &=\max(\color{red}d^*\color{black}-d_s,d(0){\color{red}-d^*})+d_{\varpi}
                        \end{align*}
                    \end{minipage}
                };
            \end{scope}
            %
            %
            \draw [{Triangle[length=3mm,width=2mm]}-{Triangle[length=3mm,width=2mm]}, line width=0.1cm, color=blue] (8.2, 8.2)  -- (11.6, 8.2);
            %
            %
            \begin{scope}[shift={(-0.0, 0.0)}]
                \node[text=blue,anchor=south] at (10.0, 8.3){\Large $\delta(0)$};
            \end{scope}
            %
            %
            \draw [{Triangle[length=3mm,width=2mm]}-{Triangle[length=3mm,width=2mm]}, line width=0.1cm, color=c006600] (6.6, 9.8)  -- (11.6, 9.8);
            %
            %
            \begin{scope}[shift={(-0.0, 0.0)}]
                \node[text=c006600,anchor=south] at (8.9, 9.9){\Large $\delta(t)$};
            \end{scope}
            %
            %
            \path[draw=cff8000,line width=0.1cm,miter limit=10.0] (6.9, 6.1) -- (6.9, 5.5);
            \begin{scope}[shift={(-0.0, 0.0)}]
                \node[text=black,anchor=south] (text218) at (5.2, 6.9) {\Large $d_{min}{\color{red}-d^*}{\color{cff8000}+\underline{\varepsilon}}$};
            \end{scope}
            \path[draw=cff8000,line width=0.0cm,miter limit=10.0] (3.8, 6.8) -- (6.5, 6.8) -- (6.85, 6.1);
            %
            %
            \path[draw=c810080,line width=0.1cm,miter limit=10.0] (11.4, 6.1) -- (11.4, 5.5);
            \begin{scope}[shift={(-0.0, 0.0)}]
                \node[text=black,anchor=south] at (13.8, 4.9){\Large $d_{max}{\color{red}-d^*}{\color{c810080}-\bar{\varepsilon}}$};
            \end{scope}
            \path[draw=c810080,miter limit=10.0] (11.4, 5.55) -- (12.4, 4.8) -- (15.2, 4.8);
            %
            %
            \path[draw=black,line width=0.0cm,miter limit=10.0] (10.3, 7.6) -- (10.3, 5.6);
            \path[draw=magenta,line width=0.1cm,miter limit=10.0] (10.3, 6.1) -- (10.3, 5.5);
            \begin{scope}[shift={(-0.0, 0.0)}]
                \node[text=black,anchor=south] (text190) at (13.8, 4.1){\Large $d_{max}{\color{red}-d^*}{\color{magenta}-d_{\varpi}}$};
            \end{scope}
            \path[draw=magenta,miter limit=10.0] (10.3, 5.55) -- (12.4, 4.0) -- (15.2, 4.0);
            %
            %
            \draw [{Triangle[length=3mm,width=2mm]}-{Triangle[length=3mm,width=2mm]}, line width=0.1cm, color=c009999] (10.3, 7.2)  -- (11.6, 7.2);
            \path[draw=c009999,line width=0.02cm,miter limit=10.0] (11.3, 7.2) -- (14, 7.2);
            %
            %
            \begin{scope}[shift={(-0.0, 0.0)}]
                \node[text=c009999,anchor=south] (text258) at (12.8, 7.3){\Large $\dot{\delta}(t)<0$};
            \end{scope}
            %
            %
            \draw [{Triangle[length=3mm,width=2mm]}-{Triangle[length=3mm,width=2mm]}, line width=0.1cm, color=c7f00ff] (6.6, 8.5)  -- (8.2, 8.5);
            \path[draw=c7f00ff,line width=0.02cm,miter limit=10.0] (4.5, 8.5) -- (6.8, 8.5);
            %
            %
            \begin{scope}[shift={(-0.0, 0.0)}]
                \node[text=c7f00ff,anchor=south] (text278) at (5.5, 8.6){\Large $\dot{\delta}(t)> 0$};
            \end{scope}
        
        \end{tikzpicture}
        }
        \caption{\label{fig:delta_bound}Possible values of $\delta(t)$.}
    \end{figure}

    \begin{figure*}
        \centering
        \includegraphics[width=0.9\textwidth,trim={0mm 232mm 9.526mm 0mm}, clip]{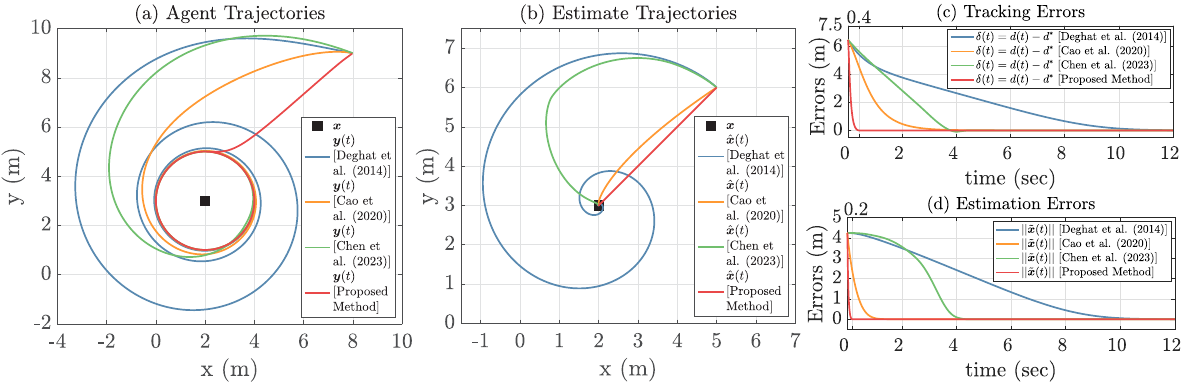}
        \caption{\label{fig:sim_1_ST}Comparative analysis of simulation results benchmarking the performance of the proposed method against established approaches from the literature.}
    \end{figure*}

    \subsection{Main Results}
    \label{sec:Main_Results}

    We now present the main results of this paper.

    \begin{theorem}
        \label{theorem:target_estimation_error_goes_to_zero}
        Suppose \textbf{Assumptions~\ref{assumption:d(0)_initial_condition}-\ref{assumption:estimation_error}} hold. Then, under the target estimator \eqref{eq:def_target_estimator} and the circumnavigation controller \eqref{eq:def_circum_controller}, the norm of the target estimation error $\norm{\tilde{\bm{x}}(t)}$   converge to zero within the strongly predefined time $T_{c,1}$.
    \end{theorem}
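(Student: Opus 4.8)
The plan is to reduce the statement to a direct application of \textbf{Lemma~\ref{lemma:PDT_stability_theorem}}, since the analytic groundwork is already finished. The enabling facts are in hand: by \textbf{Lemma~\ref{lemma:results_generalization}}, the Kreisselmeier regressor $P(t)$ is non-singular for every $t>0$ and the reconstructed error coincides with the true estimation error, $\bm{\xi}(t)=\tilde{\bm{x}}(t)$, for all $t>0$; and by \textbf{Assumption~\ref{assumption:stationary_target}}, $\dot{\tilde{\bm{x}}}(t)=\dot{\hat{\bm{x}}}(t)$. Hence, for $t>0$ the estimation error obeys the autonomous closed-loop dynamics
\begin{equation*}
    \dot{\tilde{\bm{x}}}(t) = -\frac{1}{\alpha_1 T_{c,1}}\exp\!\big(\norm{\tilde{\bm{x}}(t)}^{\alpha_1}\big)\,\bm{\psi}^{\alpha_1}\!\big(\tilde{\bm{x}}(t)\big),
\end{equation*}
obtained by substituting \eqref{eq:xi=x_tilde_IVT} (generalized) into \eqref{eq:def_target_estimator}. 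At the single instant $t=0$ the definition \eqref{eq:def_xi} prescribes $\bm{\xi}(0)=\bm{0}_2$, so $\dot{\hat{\bm{x}}}(0)=\bm{0}_2$; this affects the absolutely continuous solution only on a set of measure zero and can be disregarded.

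Next I would take the continuous, radially unbounded Lyapunov candidate $V_1(\tilde{\bm{x}}):=\norm{\tilde{\bm{x}}}$, which vanishes precisely on the target set $M:=\{\tilde{\bm{x}}=\bm{0}_2\}=\{\hat{\bm{x}}=\bm{x}\}$. Differentiating along the trajectory on the region $\tilde{\bm{x}}\neq\bm{0}_2$ and using the dynamics above gives
\begin{align*}
    \dot{V}_1 &= \frac{\tilde{\bm{x}}^\top(t)\dot{\tilde{\bm{x}}}(t)}{\norm{\tilde{\bm{x}}(t)}} \\
    &= -\frac{\exp\!\big(\norm{\tilde{\bm{x}}(t)}^{\alpha_1}\big)}{\alpha_1 T_{c,1}}\,\frac{\tilde{\bm{x}}^\top(t)\tilde{\bm{x}}(t)}{\norm{\tilde{\bm{x}}(t)}^{1+\alpha_1}} \\
    &= -\frac{1}{\alpha_1 T_{c,1}}\exp\!\big(V_1^{\alpha_1}\big)\,V_1^{\,1-\alpha_1},
\end{align*}
which is exactly the differential relation \eqref{eq:PDT_required_V_dot} required by \textbf{Lemma~\ref{lemma:PDT_stability_theorem}}, under the identifications $p=\alpha_1\in(0,1]$ and $T_c=T_{c,1}>0$. (Note the quadratic candidate $\tfrac12\norm{\tilde{\bm{x}}}^2$ used in \textbf{Lemma~\ref{lemma:x_tilde_is_bounded_by_IVT}} does \emph{not} fit this template because a factor $2^{\alpha_1/2}$ would remain inside the exponential; this is why $V_1=\norm{\tilde{\bm{x}}}$ is the appropriate choice here.)

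Finally, \textbf{Lemma~\ref{lemma:PDT_stability_theorem}} yields that $M$ is GSPTA for the estimation-error dynamics with strong predefined time $T_{c,1}$; in particular every trajectory satisfies $\tilde{\bm{x}}(t)=\bm{0}_2$ for all $t\geq T_{c,1}$, so $\lim_{t\to T_{c,1}}\norm{\tilde{\bm{x}}(t)}=0$, which is precisely \eqref{eq:def_x_tilde} of \textbf{Problem~\ref{problem:Localization_Problem}}. I do not expect a genuine obstacle here — the excitation, invertibility, and boundedness work is already done in Lemmas~\ref{lemma:IE_by_IVT}, \ref{lemma:P(t)_is_invertible_IVT}, \ref{lemma:x_tilde_is_bounded_by_IVT}, \ref{lemma:d_is_bounded_for_all_time} and \ref{lemma:results_generalization} — the only care-points being (i) justifying the substitution $\bm{\xi}(t)=\tilde{\bm{x}}(t)$ uniformly on $(0,\infty)$ and dismissing the boundary instant $t=0$, and (ii) the non-smoothness of $V_1$ at the origin, which is harmless since the estimator dynamics vanish there, $V_1$ is monotonically non-increasing, and the origin is therefore an equilibrium the solution never leaves once reached.
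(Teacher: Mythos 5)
Your proposal is correct and follows essentially the same route as the paper: substitute the generalized identity $\bm{\xi}(t)=\tilde{\bm{x}}(t)$ into the estimator, pick a norm-type Lyapunov function, and match \eqref{eq:PDT_required_V_dot} to invoke \textbf{Lemma~\ref{lemma:PDT_stability_theorem}} with $T_c=T_{c,1}$. The only cosmetic difference is that the paper works with the family $V_1=\norm{\tilde{\bm{x}}}^m$ with $mp=\alpha_1$, of which your choice $V_1=\norm{\tilde{\bm{x}}}$ (i.e., $m=1$, $p=\alpha_1$) is the special case, and you additionally spell out the $t=0$ boundary instant and the non-smoothness at the origin, which the paper leaves implicit.
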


    \begin{proof}
        This proof is an application of \cite[Lemma~2.1]{sanchez-torres2018Class}. Let $m\geq 1$ and $p\in (0,1/m]$ be auxiliary variables satisfying $mp=\alpha_1$. Consider the Lyapunov function $V_1 = \norm{\tilde{\bm{x}}(t)}^m$, whose time derivative along the system trajectories is obtained as,
        \begin{align*}
            \scalemath{0.95}{\dot{V}_1} \overset{\eqref{eq:def_target_estimator},\eqref{eq:xi=x_tilde_IVT}}&{=} \scalemath{0.95}{m \norm{\tilde{\bm{x}}(t)}^{m-2} \tilde{\bm{x}}^\top(t) \left( -\frac{\exp(\norm{\tilde{\bm{x}}(t)}^{\alpha_1})}{\alpha_1 T_{c,1}} \bm{\psi}^{\alpha_1}\big(\tilde{\bm{x}}(t)\big) \right)} \\[2pt]
            &= \scalemath{0.95}{-\frac{m}{mpT_{c,1}} \norm{\tilde{\bm{x}}(t)}^{m(1-p)} \exp \big(\norm{\tilde{\bm{x}}(t)}^{mp}\big)} \\[2pt]
            &= \scalemath{0.95}{-\frac{1}{pT_{c,1}} V_{1}^{1-p} \exp \big(V_1^{p}\big) ,} \numberthis \label{eq:V_1_dot}
        \end{align*}
        which takes on the exact form of \eqref{eq:PDT_required_V_dot} in \textbf{Lemma~\ref{lemma:PDT_stability_theorem}}. Therefore, according to \textbf{Lemma~\ref{lemma:PDT_stability_theorem}}, the set $M_1 = \{\tilde{\bm{x}}=\bm{0}_2\}$ is GSPTA with the strongly predefined time $T_{c,1}$. 
    \end{proof}

    \begin{theorem}
        \label{theorem:circum_error_goes_to_zero}
        Suppose \textbf{Assumptions~\ref{assumption:d(0)_initial_condition}-\ref{assumption:estimation_error}} hold. Then, under the target estimator \eqref{eq:def_target_estimator} and the circumnavigation controller \eqref{eq:def_circum_controller}, the tracking error $\delta(t)$ converges to the origin within the strongly predefined time $T_{c,2}$.
    \end{theorem}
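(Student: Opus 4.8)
The plan is to exploit the decoupling that emerges once the target estimation error has vanished. By \textbf{Theorem~\ref{theorem:target_estimation_error_goes_to_zero}}, $\norm{\tilde{\bm{x}}(t)} = 0$ for all $t \geq T_{c,1}$, and consequently $\varrho(t) = d(t) - \hat{d}(t) = 0$ for all $t \geq T_{c,1}$ by the triangle inequality \eqref{eq:triangular_inequality} (or, more directly, $\hat{d}(t) = \norm{\hat{\bm{x}}(t) - \bm{y}(t)} = \norm{\bm{x} - \bm{y}(t)} = d(t)$ once $\hat{\bm{x}} = \bm{x}$). Substituting $\varrho \equiv 0$ into the dynamics \eqref{eq:delta_dynamics} derived in the proof of \textbf{Lemma~\ref{lemma:d_is_bounded_for_all_time}}, the $\delta$-dynamics collapse, for $t \geq T_{c,1}$, to
\begin{equation*}
    \dot{\delta}(t) = -\operatorname{sign}\big(\delta(t)\big)\frac{\exp\big(|\delta(t)|^{\alpha_2}\big)}{\alpha_2 T_{c,2}}|\delta(t)|^{1-\alpha_2} = -\frac{\exp\big(|\delta(t)|^{\alpha_2}\big)}{\alpha_2 T_{c,2}}\sig^{1-\alpha_2}\big(\delta(t)\big),
\end{equation*}
since $\tilde{d}(t) = \hat{d}(t) - d^* = d(t) - d^* = \delta(t)$ when $\hat{d} = d$.

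Next I would introduce the Lyapunov candidate $V_2 = |\delta(t)|^{m_2}$ for auxiliary constants $m_2 \geq 1$ and $p_2 \in (0, 1/m_2]$ with $m_2 p_2 = \alpha_2$, in exact analogy with the proof of \textbf{Theorem~\ref{theorem:target_estimation_error_goes_to_zero}}. Differentiating along the collapsed dynamics gives
\begin{equation*}
    \dot{V}_2 = m_2|\delta|^{m_2-2}\delta\cdot\Big(-\frac{\exp(|\delta|^{\alpha_2})}{\alpha_2 T_{c,2}}|\delta|^{-\alpha_2}\delta\Big) = -\frac{1}{p_2 T_{c,2}}V_2^{1-p_2}\exp\big(V_2^{p_2}\big),
\end{equation*}
which matches the required form \eqref{eq:PDT_required_V_dot} of \textbf{Lemma~\ref{lemma:PDT_stability_theorem}} (with $T_c = T_{c,2}$). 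Hence the set $M_2 = \{\delta = 0\}$ is GSPTA with strong predefined time $T_{c,2}$ for the subsystem that is active after $t = T_{c,1}$.

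The one genuine subtlety — and the main obstacle — is bookkeeping the two time horizons. Lemma~\ref{lemma:PDT_stability_theorem} bounds the convergence time of $\delta$ by $T_{c,2}$ \emph{measured from the instant the collapsed dynamics take effect}, namely from $t = T_{c,1}$, which would naively give convergence by $T_{c,1} + T_{c,2}$ rather than by $T_{c,2}$. I would close this gap by invoking the standing assumption $T_{c,2} > T_{c,1} > 0$ together with the observation that the full (pre-collapse) dynamics \eqref{eq:delta_dynamics} already drive $|\delta|$ monotonically toward the band dictated by $|\varrho| \leq \norm{\tilde{\bm{x}}(0)}$: on $[0, T_{c,1}]$ the sign analysis of \textbf{Lemma~\ref{lemma:d_is_bounded_for_all_time}} shows $\delta(t)$ never leaves $\mathcal{D} - d^*$, so $|\delta(T_{c,1})|$ is bounded by a quantity independent of initial conditions. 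More cleanly, I would argue directly on the settling-time function: since Lemma~\ref{lemma:PDT_stability_theorem} gives a \emph{strong} predefined time (a uniform supremum over all initial states), the trajectory started from $\delta(T_{c,1})$ reaches the origin within $T_{c,2}$ of $T_{c,1}$; combined with $T_{c,2} > T_{c,1}$ and a re-parametrization of the predefined-time constant (or simply re-stating the guarantee as convergence within $T_{c,1} + T_{c,2}$, or tightening via the explicit settling-time estimate of \cite[Lemma~2.1]{sanchez-torres2018Class} which shows the actual settling time from a bounded initial condition is strictly less than the predefined bound), one obtains convergence of $\delta$ to zero by time $T_{c,2}$. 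I would also note that once $\delta = 0$, i.e. $d = d^*$, equation \eqref{eq:pe_bound_step1} gives $\dot{\theta} = \dot{\gamma} = k_\omega / d^* = \omega^*$, establishing the sustained-circulation claim of \textit{Problem~\ref{problem:Circumnavigation_Problem}}.
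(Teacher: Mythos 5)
Your proposal is correct and takes essentially the same route as the paper: after invoking Theorem~\ref{theorem:target_estimation_error_goes_to_zero} to set $\varrho(t)\equiv 0$ for $t\geq T_{c,1}$, the paper substitutes into \eqref{eq:delta_dynamics}, takes the Lyapunov candidate $V_2=|\delta(t)|^{n}$ with $nq=\alpha_2$, and applies Lemma~\ref{lemma:PDT_stability_theorem} exactly as you do. The time-bookkeeping subtlety you flag (convergence measured from $T_{c,1}$ versus from $0$) is genuine, but the paper itself only dismisses it informally by noting that $\delta(t)$ stays bounded in $\mathcal{D}-d^*$ before $T_{c,1}$, so your added discussion goes beyond, rather than against, the published argument.
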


    \begin{proof}
        From \textbf{Lemma~\ref{lemma:d_is_bounded_for_all_time}}, we know that $d(t)\in \mathcal{D}$ for all $t\geq 0$, which implies that for $t\leq T_{c,1}$, the signal $\delta(t)=d(t)-d^*$ is bounded and will not escape convergence. Therefore, we may consider the dynamics of the tracking error $\delta(t)$ for the phase after $t\geq T_{c,1}$, which is found as
        \begin{align*}
            \dot{\delta}(t) \overset{\eqref{eq:delta_dynamics}}&{=} - \frac{\exp\big(|\delta(t)-\varrho(t)|^{\alpha_2}\big)}{\alpha_2 T_{c,2}} \sig^{1-\alpha_2}\big(\delta(t) - \varrho(t)\big) \\
            \overset{\textit{Thm.~\ref{theorem:target_estimation_error_goes_to_zero}}}&{=} - \frac{\exp\big(|\delta(t)|^{\alpha_2}\big)}{\alpha_2 T_{c,2}} \sig^{1-\alpha_2}\big(\delta(t)\big), \quad t \geq T_{c,1}. \numberthis \label{eq:delta_dynamics_2}
        \end{align*}
        Introduce auxiliary variables $n\geq 1$ and $q\in (0,1/n]$  such that $nq=\alpha_2$. Then, consider the Lyapunov candidate $V_2=|\delta(t)|^n$, whose time derivative along the system trajectories is obtained as,
        \begin{align*}
            \dot{V}_2 \overset{\eqref{eq:delta_dynamics_2}}&{=} \scalemath{0.95}{n |\delta(t)|^{n-2} \delta(t) \left(-\frac{\exp\big( |\delta(t)|^{\alpha_2} \big)}{\alpha_2 T_{c,2}}\sig^{1-\alpha_2} \big(\delta(t)\big)\right)} \\
            &= \scalemath{0.95}{-\frac{n}{\alpha_2 T_{c,2}} |\delta(t)|^{n-2} \exp\big( |\delta(t)|^{\alpha_2} \big)|\delta(t)|^{2-\alpha_2}} \\
            &= \scalemath{0.95}{-\frac{n}{nq T_{c,2}} \exp\big( |\delta(t)|^{nq} \big)|\delta(t)|^{n(1-q)} }\\
            &= \scalemath{0.95}{- \frac{1}{q T_{c,2}} \exp \big(V_{2}^{q}\big) V_{2}^{1-q} , }\numberthis \label{eq:V_2_dot}
        \end{align*}
        which takes on the exact form of \eqref{eq:PDT_required_V_dot} in \textbf{Lemma~\ref{lemma:PDT_stability_theorem}}. Therefore, according to \textbf{Lemma~\ref{lemma:PDT_stability_theorem}}, the set $M_2=\{\delta = 0\}$ is GSPTA with the strongly predefined time $T_{c,2}$. 
    \end{proof}


    \begin{theorem}
        \label{theorem:angular_velocity}
        Suppose \textbf{Assumptions~\ref{assumption:d(0)_initial_condition}-\ref{assumption:estimation_error}} hold. Then, under the target estimator \eqref{eq:def_target_estimator} and the controller \eqref{eq:def_circum_controller}, the angular velocity $\dot{\theta}(t)$ converges to $\omega^*$ after $t\geq T_{c,2}$.
    \end{theorem}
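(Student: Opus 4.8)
The plan is to recycle the expression for the agent's angular rate that was already derived inside the proof of \textbf{Lemma~\ref{lemma:IE_by_IVT}}. First I would recall from \eqref{eq:Dzeta} and the chain of equalities ending in \eqref{eq:pe_bound_step1} that, along the closed-loop trajectories, $\dot{\theta}(t) = k_\omega / d(t)$; the only prerequisite for this identity is that $d(t)$ be nonzero, which \textbf{Lemma~\ref{lemma:d_is_bounded_for_all_time}} guarantees for \emph{all} $t \geq 0$ (indeed $d(t) \geq d_{min} > 0$). Hence this identity is valid globally in time, not merely on $[0,\mathcal{T}_1)$.

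Next I would invoke \textbf{Theorem~\ref{theorem:circum_error_goes_to_zero}}, which states that the tracking error $\delta(t) = d(t) - d^*$ reaches the set $M_2 = \{\delta = 0\}$ by the strong predefined time $T_{c,2}$. Since $\delta \equiv 0$ is an equilibrium of the scalar dynamics \eqref{eq:delta_dynamics_2} that govern $\delta$ for $t \geq T_{c,1}$ (the perturbation $\varrho$ vanishes for $t \geq T_{c,1}$ by \textbf{Theorem~\ref{theorem:target_estimation_error_goes_to_zero}}) --- equivalently, since the Lyapunov function $V_2 = |\delta(t)|^n$ is non-increasing by \eqref{eq:V_2_dot} and thus stays at zero once it attains zero --- we obtain $\delta(t) = 0$, i.e.\ $d(t) = d^*$, for every $t \geq T_{c,2}$.

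Finally I would substitute $d(t) = d^*$ together with $k_\omega = \omega^* d^*$ into $\dot{\theta}(t) = k_\omega / d(t)$, yielding $\dot{\theta}(t) = \omega^* d^* / d^* = \omega^*$ for all $t \geq T_{c,2}$, which is precisely the assertion. There is no substantial obstacle here beyond bookkeeping; the one point I would state with care is the second step --- that the GSPTA property only directly yields reachability of $M_2$ at \emph{some} finite time not exceeding $T_{c,2}$, and it is the invariance of $\{\delta = 0\}$ under \eqref{eq:delta_dynamics_2} (equivalently the monotonicity of $V_2$) that upgrades this to $\delta(t) = 0$ for \emph{all} $t \geq T_{c,2}$, legitimizing the substitution on the whole half-line $[T_{c,2},\infty)$ rather than at the single instant $T_{c,2}$.
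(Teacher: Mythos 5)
Your proposal is correct and follows essentially the same route as the paper: invoke \textbf{Theorem~\ref{theorem:circum_error_goes_to_zero}} to get $d(t)=d^*$ for $t\geq T_{c,2}$, reuse the identity $\dot{\theta}(t)=k_\omega/d(t)$ from \eqref{eq:Dzeta} and \eqref{eq:pe_bound_step1} (valid for all $t\geq 0$ thanks to \textbf{Lemma~\ref{lemma:d_is_bounded_for_all_time}}/\textbf{Lemma~\ref{lemma:results_generalization}}), and substitute $k_\omega=\omega^*d^*$. Your extra remark that the GSPTA property gives only reachability and that invariance of $\{\delta=0\}$ under \eqref{eq:delta_dynamics_2} (monotonicity of $V_2$) is what keeps $d(t)=d^*$ on all of $[T_{c,2},\infty)$ is a small but valid refinement of a point the paper leaves implicit.
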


    \begin{proof}
        From \textbf{Theorem~\ref{theorem:circum_error_goes_to_zero}}, we know that $d(t)=d^*$ for all $t\geq T_{c,2}$, and from \textbf{Lemma~\ref{lemma:results_generalization}}, we know that results in \textbf{Lemma~\ref{lemma:IE_by_IVT}} holds for all $t\geq 0$. Therefore, using \eqref{eq:Dzeta} and \eqref{eq:pe_bound_step1} in \textbf{Lemma~\ref{lemma:IE_by_IVT}}, we obtain that $\mathrm{d}\theta(t)/\mathrm{d}t=k_\omega/d(t)=\omega^*d^*/d^*=\omega^*$ for all $t\geq T_{c,2}$.
    \end{proof}
    
\section{Simulations}
\label{sec:Simulations}
    This section presents a simulated example to demonstrate the performance of the proposed estimation and control algorithms when the target remains stationary. To evaluate the proposed method relative to existing approaches, we benchmark it against notable algorithms from the literature: the pioneering work \cite{deghat2014Localization}, a scalar-estimator-based controller by Cao et al. (2020) \cite{cao2020LowCost}, and a recent finite-time circumnavigation control by Chen et al. (2023) \cite{chen2023Finitetime}. These approaches were selected due to their methodological similarities. 

    To enable a thorough comparison, we briefly summarize the control algorithms from each reference. In Deghat et al. \cite[Eqn.~(5), (6)]{deghat2014Localization}, the estimator and controller are: $\dot{\hat{\bm{x}}}(t) = k_{est}  (I- \bm{\varphi}(t)\bm{\varphi}^\top(t))(\bm{y}(t)-\hat{\bm{x}}(t))$ and $ \bm{u}(t) = k_{\alpha} ( \hat{d}(t) -d^* )\bm{\varphi}(t) + k_\beta \bar{\bm{\varphi}}(t)$. In Cao et al. \cite[Eqn.~(1), (2)]{cao2020LowCost}, the control laws are $\dot{\hat{\rho}}(t)=-\bm{\varphi}^\top(t)\dot{\bm{y}}(t) + k_e \bar{\bm{\varphi}}^\top(t)\dot{\hat{\bm{x}}}(t)$, $\hat{\bm{x}}(t)=\bm{y}(t)+\hat{\rho}(t)\bm{\varphi}(t)$, $\bm{u}(t)=\kappa_\alpha (\hat{\rho}(t)-d^*)\bm{\varphi}(t)+\kappa_\beta \bar{\bm{\varphi}}(t)$. From Chen et al. \cite[Eqn.~(9), (10), (12)]{chen2023Finitetime}, the control algorithms adapted for a single agent are $\dot{\hat{\bm{x}}}(t) = -\kappa_{est} P^\top (t) \sig^{\beta_{1}}(P(t)\hat{\bm{x}}(t)-\bm{y}(t))$, where $\dot{P}(t) = -P(t) + \bar{\bm{\varphi}}(t)\bar{\bm{\varphi}}^\top(t),\;  P(0) = \bm{0}_{2\times 2} $, $\dot{\bm{q}}(t) = -\bm{q}(t) + \bar{\bm{\varphi}}(t)\bar{\bm{\varphi}}^\top (t) \bm{y}(t), \; \bm{q}(0)= \bm{0}_{2}$, $\bm{u}(t) = k_d \sig^{\beta_2}( \hat{d}(t) -d^* )\bm{\varphi}(t) + k_\varphi \bar{\bm{\varphi}}(t)$. Control constants $(k_{est}, k_\alpha, k_\beta)$, $(k_e, \kappa_\alpha, \kappa_\beta)$, and $(\kappa_{est}, k_d, k_\varphi, \beta_1, \beta_2)$ are defined in \cite{deghat2014Localization}, \cite{cao2020LowCost}, and \cite{chen2023Finitetime}, respectively.

    We simulate the solutions to the dynamic system \eqref{eq:agent_kinematics}, driven by the proposed algorithms \eqref{eq:def_target_estimator}, \eqref{eq:def_circum_controller}, as well as the controllers from \cite{deghat2014Localization,cao2020LowCost,chen2023Finitetime}, under the same initial conditions and parameters. The target’s position is $\bm{x}(t) \equiv [2m,3m]^\top $ for all $t\geq 0$, and the agent’s initial position is $\bm{y}(0)=[8m,9m]^\top $. The desired circumnavigation radius is $d^* = 2\, m$, with the agent's initial guess of the target’s position set to $\hat{\bm {x}}(0)=[5m,6m]^\top$. In light of the discussions in \cite{deghat2014Localization,cao2020LowCost,chen2023Finitetime}, the control gains are selected as $T_{c,1}=0.2$ seconds, $T_{c,2}=0.4$ seconds, $k_{est}=k_e=\kappa_{est}=5$,  $k_\omega = k_\beta = \kappa_\beta = k_\varphi = 5$, $k_\alpha = \kappa_\alpha = 1.5$, and $\alpha_1 = \alpha_2 = \beta_1 = \beta_2 = 0.5$.

    The trajectories of the agent presented in Fig.~\ref{fig:sim_1_ST}(a) put in evidence that the agent controlled by the proposed algorithms reaches the desired orbit much faster than those using the algorithms from \cite{deghat2014Localization,cao2020LowCost,chen2023Finitetime}. Further, consistent with \textbf{Theorem~\ref{theorem:circum_error_goes_to_zero}}, Fig.~\ref{fig:sim_1_ST}(c) shows that the tracking error under the proposed method converges to the origin within the strong predefined time $T_{c,2}=0.4$ seconds. Fig.~\ref{fig:sim_1_ST}(b) displays the trajectories of the estimated target position $\hat{\bm{x}}(t)$ generated by the agent under different algorithms. Notably, the estimate from the proposed method moves along an almost straight line (which is also the shortest path) toward the true target position, whereas estimates from the other methods follow more indirect and longer paths. The norms of the estimation errors are graphically illustrated in Fig.~\ref{fig:sim_1_ST}(d). As expected from \textbf{Theorem~\ref{theorem:target_estimation_error_goes_to_zero}}, the estimation error under the proposed method converges to the origin within the strong predefined time $T_{c,1}=0.2$ seconds, demonstrating significantly faster convergence than the methods in \cite{deghat2014Localization} and \cite{chen2023Finitetime}, and a notably faster convergence rate than that in \cite{cao2020LowCost}. However, while the estimation algorithm in Cao et al. \cite{cao2020LowCost} achieves comparably fast convergence rate, it requires either bearing rate measurements or explicit differentiation, rendering it vulnerable to high frequency noise. In contrast, the proposed method is more robust against measurement noise as it relies solely on bearing measurements.


    \begin{figure*}
        \centering
        \includegraphics[width=0.425\textwidth,trim={4.639mm 12.973mm 9.867mm 37.453mm}, clip]{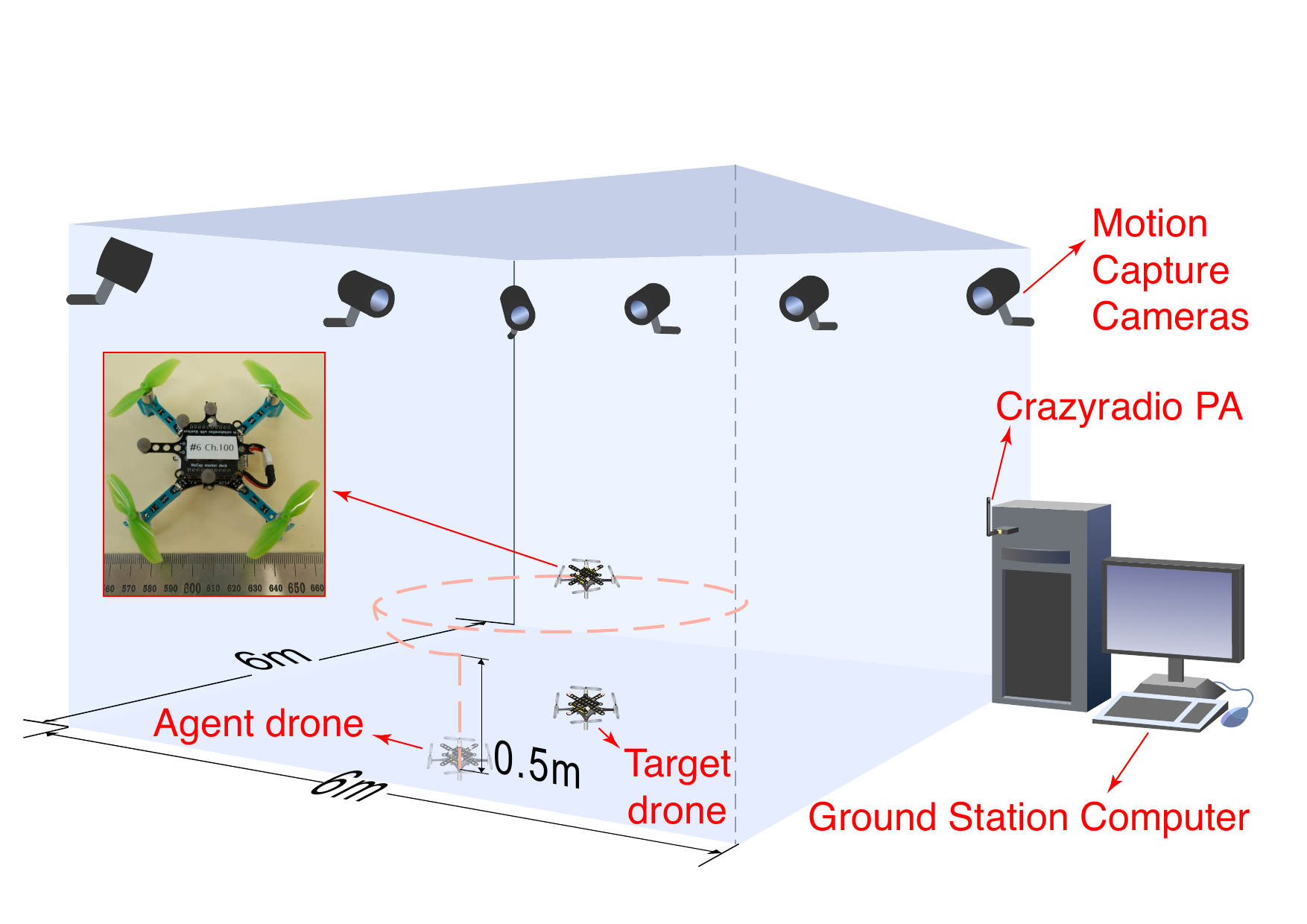}
        \includegraphics[width=0.425\textwidth,trim={0mm 200.281mm 34.655mm 0mm}, clip]{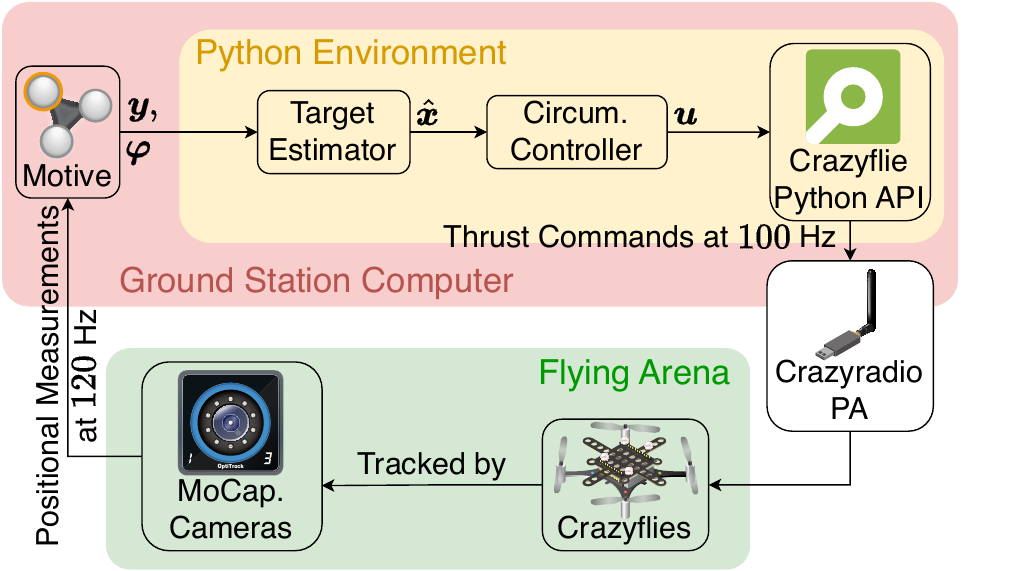}
        \includegraphics[width=0.85\textwidth,trim={0cm 225.1mm 0cm 67.921mm}, clip]{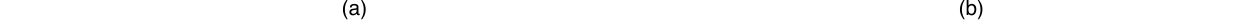}
        \caption{\label{fig:exp_setup}(a) Schematic diagram of the flying arena; (b) Block diagram of the experimental system.}
    \end{figure*}

    \begin{figure*}
        \centering
        \includegraphics[width=0.9\textwidth,trim={0mm 232mm 17.546mm 0mm}, clip]{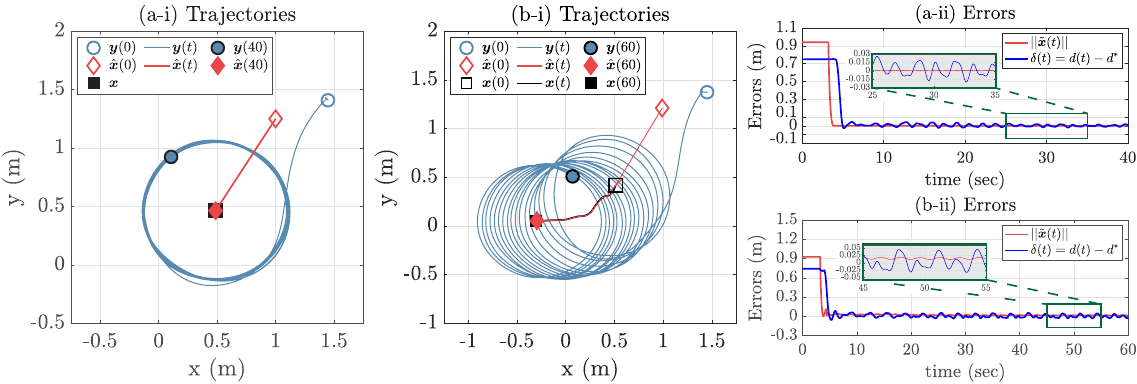}
        \caption{\label{fig:exp_plots}Experimental results demonstrating the effectiveness of the proposed estimation and circumnavigation control algorithms under different scenarios.}
    \end{figure*}
    
\section{Experiments}

    \subsection{Experimental Setup}

    To validate the proposed algorithms in real-world settings, we carried out experiments in a $(6\times 6)\,m^2$ indoor flying arena equipped with $10$ motion capturing cameras (\textit{OptiTrack Prime}$^x$ $13$). Two \textit{Crazyflie 2.1} quadcopters (see Fig.~\ref{fig:exp_setup}(a)) were used: one was assigned as the target, and the other served as the agent that is tasked with the \textit{2-dimensional} BoTLC mission on a constant altitude plane at $0.5m$. Since maintaining a constant altitude for the quadcopter is straightforward, we omit further discussion of the drones' $z$-coordinates. Planar bearing data were extracted from 3D ground-truth positional measurements using the \textit{Motive~2.0} motion capture software. The circumnavigation controller \eqref{eq:def_circum_controller} and the target estimator \eqref{eq:def_target_estimator}, adapted for discrete-time deployment, were computed on a ground station using the 2D bearing data. Control signals were transmitted to the agent via \textit{Crazyradio PA}. The system architecture is illustrated in the block diagram, as presented in Fig.~\ref{fig:exp_setup}(b).

    \subsection{Experiment 1 - Stationary Target}
    \label{sec:Exp_1_ST}

    In the first experiment, we consider a stationary target positioned at $\bm{x}(t) = [0.49m, 0.46m]^\top$ for all $t\geq 0$. The agent drone is initially positioned at $\bm{y}(0)=[1.45m, 1.41m]^\top$. All coordinates are defined in the Earth frame. The control gains are selected as $\alpha_1 = \alpha_2 = 0.5$, $T_{c,1}=1$ second, $T_{c,2}=2$ seconds, $k_\omega = 1$, and the desired circumnavigation radius is set to $d^*=0.6m$. The initial guess of target position is set as $\hat{\bm{x}}(0) = [1m, 1.25m]^\top$. The trajectories and motion trails of the agent drone for a runtime of $t\in[0,40]$ seconds, depicted in Fig.~\ref{fig:exp_plots}(a-i) and Fig.~\ref{fig:echo_photo} respectively, show that the agent drone driven by the proposed algorithms successfully localizes and maintains a circular orbit centered at the target. The performance metrics presented in Fig.~\ref{fig:exp_plots}(a-ii) align with the simulation results and also the expectations from \textbf{Theorems~\ref{theorem:target_estimation_error_goes_to_zero}-\ref{theorem:circum_error_goes_to_zero}}, despite minor oscillations in the error signal $\delta(t)$.

    \subsection{Experiment 2 - Slowly Drifting Target}

    In the second experiment, we consider a moving target initially positioned at $\bm{x}(0)= [0.51m, 0.42m]^\top$ and slowly drifting with a time-varying velocity given by
    \begin{equation*}
        \dot{\bm{x}}(t) = \begin{bmatrix}
            -0.0125 - 0.0125 e^{-0.05t} |\sin (2\pi \cdot 0.03 t)| \, m/s \\
            -0.075 e^{-0.1 t} |\cos (2\pi \cdot 0.03 t)| \, m/s
        \end{bmatrix},
    \end{equation*}
    for a runtime of $t\in [0,60]$ seconds. All other parameters and initial conditions are the same to those in Section~\ref{sec:Exp_1_ST}. The trajectories in Fig.~\ref{fig:exp_plots}(b-i) demonstrate that the agent accurately localizes and circumnavigates the moving target, despite the presence of an unknown time-varying velocity. In Fig.\ref{fig:exp_plots}(b-ii), the convergence of the error signals $\norm{\tilde{\bm{x}}(t)}$ and $\delta(t)$ to small neighborhoods around the origin further confirms the robustness of the proposed control algorithms.

\section{CONCLUDING REMARKS}
\label{sec:conclusion}

    This paper presents a novel target estimation algorithm and an improved circumnavigation controller, both exhibiting strong predefined-time stability, for localizing and orbiting an unknown target based solely on bearing measurements (without relying on explicit differentiation of measurements or any rate measurements). The predefined-time stability of the overall system was rigorously analyzed. Simulation and experimental results showcase satisfactory performance of the proposed control algorithms.

    Future research directions include extending the proposed method to achieve unbiased estimation of moving target(s), accounting for noise, handling intermittent or discrete-time measurements, and applying the approach to multi-agent circumnavigation systems with guaranteed safety.


\addcontentsline{toc}{chapter}{Bibliography}
\bibliographystyle{ieeetr}

\end{document}